\newcommand{\m}[1]{{\bf{#1}}}
\newcommand{\g}[1]{\boldsymbol #1}
\newcommand{\bb}[1]{\mathbb #1}
\newcommand{\tr}{^{\sf T}}
\title{\bf Modified Legendre-Gauss-Radau Collocation Method for \\ Optimal Control Problems with Nonsmooth Solutions}
\author{Joseph~D.~Eide\thanks{Ph.D.~Student, Department of Mechanical and Aerospace Engineering, University of Florida, Gainesville, Florida 32611-6250. Email: gatoreide@ufl.edu} \\ William W.~Hager\thanks{Distinguished Professor, Department of Mathematics, University of Florida, Gainesville, FL 32611-6250. Fellow, Society for Industrial and Applied Mathematics.  E-mail:  hager@ufl.edu.} \\ Anil V.~Rao\thanks{Professor, Department of Mechanical and Aerospace Engineering, University of Florida, Gainesville, FL 32611-6250. Erich Farber Faculty Fellow and University Term Professor.  Associate Fellow, AIAA.  E-mail:  anilvrao@ufl.edu.  Corresponding Author.} \\ \\ {\em University of Florida} \\ {\em Gainesville, FL 32611}}
\date{}
\newlength{\widthOfItem}
\newcommand{\mbb}[1]{\mathbb{#1}}
\newcommand{\bmL}{\g{\Lambda}}
\newcommand{\R}{\mbb{R}}
\newcommand{\bmU}{\m{U}}
\newcommand{\bml}{\g{\lambda}}
\newcommand{\rk}{^{(k)}}
\newcommand{\rK}{^{(K)}}
\newcommand{\rktr}{^{(k)^{\sf T}}}
\newcommand{\intoo}{\int^{+1}_{-1}}
\newcommand{\jl}{j = 1,\hdots,N}
\newtheorem{theorem}{Theorem}
\begin{document}

\maketitle

\newcommand{\trs}{^{\sf *T}}







\begin{abstract}
  {\noindent}A new method is developed for solving optimal control problems whose solutions are nonsmooth.  The method developed in this paper employs a modified form of the Legendre-Gauss-Radau orthogonal direct collocation method.  This modified Legendre-Gauss-Radau method adds two variables and two constraints at the end of a mesh interval when compared with a previously developed standard Legendre-Gauss-Radau collocation method.  The two additional variables are the time at the interface between two mesh intervals and the control at the end of each mesh interval.  The two additional constraints are a collocation condition for those differential equations that depend upon the control and an inequality constraint on the control at the endpoint of each mesh interval.  The additional constraints modify the search space of the nonlinear programming problem such that an accurate approximation to the location of the nonsmoothness is obtained.  The transformed adjoint system of the modified Legendre-Gauss-Radau method is then developed.  Using this transformed adjoint system, a method is developed to transform the Lagrange multipliers of the nonlinear programming problem to the costate of the optimal control problem.  Furthermore, it is shown that the costate estimate satisfies one of the Weierstrass-Erdmann optimality conditions.  Finally, the method developed in this paper is demonstrated on an example whose solution is nonsmooth.  
\end{abstract}

\renewcommand{\baselinestretch}{2}
\normalsize
\normalfont

\section{Introduction\label{sect:introduction}}

Over the past two decades, direct collocation methods have become increasingly popular for computing the numerical solution of constrained optimal control problems.  A direct collocation method is an implicit simulation method where the state and control are both parameterized and the constraints in the continuous optimal control problem are enforced at a specially chosen set of collocation points.  This approximation of the continuous optimal control problem leads to a finite-dimensional nonlinear programming problem (NLP)\cite{Betts:2010}, and the NLP is solved using well known software \cite{Gill:2005,Biegler:2008}.  Originally, direct collocation methods were developed as $h$ methods (for example, Euler or Runge-Kutta methods) where the time  interval is divided into a mesh and the state is approximated using the same fixed-degree polynomial in each mesh interval.  Convergence in an $h$ method is then achieved by increasing the number of mesh intervals \cite{Betts:2010,Jain:2008,Zhao:2011}.   More recently, a great deal of research as been done in the class of direct {\em Gaussian quadrature orthogonal collocation} methods \cite{Elnagar:1995,Elnagar:1997,Fahroo:2001,Darby:2011a,Darby:2011b,Patterson:2015,Liu:2015,Liu:2018}.  In a Gaussian quadrature collocation method, the state is typically approximated using a Lagrange polynomial where the support points of the Lagrange polynomial are chosen to be points associated with a Gaussian quadrature.   Originally, Gaussian quadrature collocation methods were implemented as $p$ methods using a single interval.   Convergence of the $p$ method was then achieved by increasing the degree of the polynomial approximation.  For problems whose solutions are smooth and well-behaved, a Gaussian quadrature collocation method has a simple structure and converges at an exponential rate \cite{Canuto:2012,Fornberg:1998,Trefethen:2000}.   The most well developed Gaussian quadrature methods are those that employ either Legendre-Gauss (LG) points \cite{Benson:2006,Rao:2010}, Legendre-Gauss-Radau (LGR) points \cite{Kameswaran:2008,Garg:2011a,Garg:2011b,Garg:2010}, or Legendre-Gauss-Lobatto (LGL) points \cite{Elnagar:1995}.  In addition, a convergence theory has recently been developed using Gaussian quadrature collocation.   Research on this theory had demonstrated that, under certain assumptions of the smoothness and coercivity, an $hp$ Gaussian quadrature method that employs either LG or LGR collocation points converges to a local minimizer of the optimal control problem \cite{Hager:2015a,Hager:2016a,Hager:2016b,Hager:2018,Hager:2019,DuChenHager:2019}.  In particular, it was shown in Refs.~\cite{Hager:2015a,Hager:2016a,Hager:2016b,Hager:2018,Hager:2019,DuChenHager:2019} that the convergence rate is exponentially fast as a function of the polynomial degree and is a polynomial function of the mesh interval width.  

While Gaussian quadrature orthogonal collocation methods are well suited to solving optimal control problems whose solutions are smooth, it is often the case that the solution of an optimal control problem has a nonsmooth optimal control \cite{br:aoc}. The difficulty in solving problems with nonsmooth control lies in determining when the nonsmoothness occurs. For example, dynamical systems where the control appears linearly or problems that have state inequality path constraints often have solutions where the control and state may be nonsmooth.  One approach to handling nonsmoothness is to employ a mesh refinement method where the optimal control problem is discretized using a sequence of meshes such that the last mesh satisfies a specified solution accuracy tolerance.  In the context of Gaussian quadrature collocation, $hp$ mesh refinement methods \cite{Kameswaran:2008,Garg:2011a,Garg:2011b,Garg:2010,Darby:2011a,Patterson:2015,Darby:2011,Liu:2015,Liu:2018} have been developed in order to improve accuracy in a wide variety of optimal control problems including those whose solutions are nonsmooth.  It is noted, however, that mesh refinement methods often place an unnecessarily large number of collocation points and mesh intervals near points of nonsmoothness in the solution.  Thus, it is beneficial to develop techniques that take advantage of the rapid convergence of a Gaussian quadrature collocation methods in segments where the solution is smooth and only increase the size of the mesh when necessary (thus, maintaining a smaller mesh than might be possible with a standard mesh refinement approach).  

For optimal control problems where the solution is nonsmooth the convergence theory developed in Refs.~\cite{Hager:2015a,Hager:2016a,Hager:2016b, Hager:2018,Hager:2019} is not applicable.  Consequently, when the solution of an optimal control problem is nonsmooth, an $hp$ method may not converge to a local minimizer of the optimal control problem.  A well studied class of problems where the smoothness and coercivity conditions found in Ref.~\cite{Hager:2016a} are not met are those where the control appears linearly in the problem formulation \cite{br:aoc,Guerra:2009,Li:2012,Martinon:2009}.  One approach for estimating the location of nonsmoothness is to introduce a variable called a break point \cite{Cuthrell:1987} that defines the location of nonsmoothness and to include this variable in the NLP.  The key problem that arises by introducing a break point is that the NLP has an extra degree of freedom.  As a result, the NLP may converge to a solution where this additional variable does not correspond to the location of the nonsmoothness.  This extra degree of freedom can be addressed by introducing additional constraints into the problem.  Reference \cite{Cuthrell:1989} introduced an inequality constraint at the break point location which bounded the value of each control at the break point.  However, in order to estimate the value of the control at the breakpoint, the control was parameterized as a polynomial using the information from the interior control points.  Research performed in Ref.~\cite{Chen:2014,Chen:2016,Chen:2019} introduced a bilevel or nested approach to solving nonsmooth optimal control problems. The bilevel approach formulates two separate NLP problems referred to as an inner problem and an outer problem.  The inner problem is the transcribed optimal control problem whereas the outer problem determines the properties of the mesh which is used by the inner problem and insures additional optimality conditions associated with nonsmooth optimal control problems are satisfied.  Similar to the method in Ref.~\cite{Cuthrell:1989}, the inner problem places explicit assumptions on the control functions. Reference~\cite{Ross:2004} also developed the concept of a knot using Legendre-Gauss-Lobatto collocation by introducing a variable that defines the switch time and collocating the dynamics at both the end of a mesh interval and the start of the subsequent mesh interval.  However, the LGL method used in in Ref.~\cite{Ross:2004} employs a square and singular differentiation matrix.  Therefore, unlike the approach of Ref.~\cite{Cuthrell:1987}, which used Legendre-Gauss collocation, the scheme used in Ref.~\cite{Ross:2004} is not a Gauss quadrature integrator.

The objective of this research is to develop a new method that employs Gaussian quadrature collocation and accurately approximates the solution of an optimal control problem whose solution is nonsmooth by letting the location of the nonsmoothness be a free variable in the problem.  In this paper, an approach is developed to improve upon the approach originally developed in Ref.~\cite{Cuthrell:1987} by gaining a better understanding why an incorrect location of the nonsmoothness in the optimal control is obtained when solving an optimal control problem using Legendre-Gauss-Radau collocation and introducing a constraint that will satisfy the equations of motion at the nonsmoothness but will not place explicit assumptions on the optimal control function.  Specifically, it is shown in this paper that the incorrect nonsmoothness location is obtained due to Lavrentiev phenomenon \cite{Lavrentiev:1927}.  Lavrentiev phenomenon occurs in a practical situation when it is desired to minimize a numerical approximation of a continuous (functional) optimization problem.  In particular, a continuous optimization problem may be subject to Lavrentiev phenomenon whenever a numerical approximation of a functional leads to an optimal objective value that is either strictly greater than or strictly less than the optimal value of the functional \cite{Ball:1987,Eide:2016,Eide:2018a,Eide:2018b}.  Simple examples of optimization problems that possess Lavrentiev phenomenon are given in Ref.~\cite{Mania:1934}, and the concept of Lavrentiev phenomenon has been extended to optimal control through the Lavrentiev gap \cite{Guerra:2009}.  The reason that the approximation of the continuous optimization problem has a higher or lower optimal objective arises from the possibility that the space over which the numerical optimization is performed may be different from the space over which the optimization needs to be performed in order to converge to the optimal solution.  Therefore, the existence and the behavior of Lavrentiev phenomenon depends upon the choice of the approximation method.  Moreover, any numerical scheme that gives rise to Lavrentiev phenomenon must somehow be augmented to compensate for any errors caused by the Lavrentiev phenomenon itself.  Initial explorations of Lavrentiev phenomenon using Gaussian quadrature collocation methods have been provided in Refs.~\cite{Eide:2016,Eide:2018a,Eide:2018b}.  In order to properly account for Lavrentiev phenomenon it is first necessary to understand the circumstances in which it occurs for any given numerical scheme.  

It is important to note that the approach developed in this paper is fundamentally different from the approaches developed in Refs.~\cite{Cuthrell:1987}, \cite{Ross:2004}, and \cite{Chen:2014,Chen:2016,Chen:2019}.  The key difference between the approach of this paper and that of Ref.~\cite{Cuthrell:1987} is that the search space is modified to include collocation constraints on the differential equations that are a function of control whereas the approach of Ref.~\cite{Cuthrell:1987} introduces no such additional collocation constraints.  Next, the key difference between the approach of this paper and the work of Ref.~\cite{Ross:2004} is that the work of Ref.~\cite{Ross:2004} collocates all of the differential equations at the end of a mesh interval where a solution may be nonsmooth whereas in this work collocation constraints are included at the end of a mesh interval on only those differential equations that are a function of control.  Second, the method of Ref.~\cite{Ross:2004} uses Legendre-Gauss-Lobatto which employs a square and singular differentiation matrix.  On the other hand, the approach developed in this paper employs Legendre-Gauss-Radau collocation where the differentiation matrix is rectangular.  Moreover, it has been shown previously that Legendre-Gauss-Radau is a Gaussian quadrature integration method \cite{Garg:2010}.  Finally, the key difference between the method of this paper and the methods of Refs.~\cite{Chen:2014,Chen:2016,Chen:2019} is that the methods of \cite{Chen:2014,Chen:2016,Chen:2019} parameterize the control as a function of time and this parameterization is used to approximate the control at the end of each mesh interval.  The method of this paper, however, introduces a variable that defines the control at the end of a mesh interval and adds collocation conditions at the end of the mesh interval using only those differential equations that are a function of the control.  

This paper presents a new method for Gaussian quadrature collocation.  In this new method, the standard LGR method is modified to include additional variables and additional constraints at the end of a mesh interval when compared with a previously developed standard Legendre-Gauss-Radau collocation method.  The additional variables are the time associated with mesh interval boundaries and the corresponding value of the control at the end of the mesh interval.  The additional constraints are collocation conditions on those differential equations that are a function of the control and inequality constraints on the control at the endpoint of each mesh interval.  It is important to note that the additional constraints are added to only those collocation constraints associated with the differential equations that are functions of the control and are {\em not} added to all differential equations.  The modified method results in a different control variable at the end of each mesh interval from the control variable at the start of the next mesh interval.  A costate estimation method is then developed that transforms the Lagrange multipliers of the NLP to the costate of the optimal control problem \cite{Garg:2011a,Garg:2011b,Garg:2010}.  Using this costate estimation method, the transformed adjoint system \cite{Garg:2011a,Garg:2011b,Garg:2010,Hager:2000} of the modified LGR collocation method is developed.  It is also shown that the state and control obtained from the modified LGR method along with the new costate estimation scheme satisfies one of the necessary Weierstrass-Erdmann conditions when the solution of the optimal control problem is nonsmooth and therefore does not require additional constraints to enforce the Weierstrass-Erdmann conditions.

The remainder of this paper is organized as follows.  Section \ref{sect:notation} provides the notations and conventions used in this paper.  Section \ref{sect:bolza} present the Bolza optimal control problem.  Section \ref{sect:LGR} presents the standard Legendre-Gauss-Radau (LGR) collocation method for discretizing optimal control problems.  Section \ref{sect:Lavrentiev-LGR} provides a description of Lavrentiev phenomenon, a discussion of the Lavrentiev gap that arises when using LGR collocation to solve an optimal control problem whose solution is nonsmooth, and an analysis of the search space using LGR collocation.  Section \ref{sect:LGRmod} presents the modified LGR collocation method for solving optimal control problems with nonsmooth solutions.  Section \ref{sect:modadjoint} derives the transformed adjoint system along with the Weierstrass-Erdmann conditions that arise from modified LGR collocation and demonstrates the accuracy of the modified LGR collocation method costate estimate.  Finally, Section \ref{sect:conclusions} provides conclusions on this research.  

\section{Notation and Conventions\label{sect:notation}}

In this paper, the following notation and conventions will be used.  First, the independent variable is denoted $\tau$.  Therefore, the notation $x(\tau)$ denotes a dependence of the quantity $x$ on $\tau$.  Next, all vectors will be denoted as {\em row} vectors.  Therefore, if $\m{x}(\tau)\in\bb{R}^n$ is a vector function of $\tau$, then $\m{x}(\tau)$ is given as
\begin{equation}\label{convention-row-vector-function-of-tau}
  \m{x}(\tau) = \left[x_1(\tau),x_2(\tau),\ldots,x_n(\tau)\right].
\end{equation}
Suppose now that $\m{x}(\tau)$ is approximated using a basis of Lagrange polynomials $\ell_j(\tau),\; (j=1,\ldots,N+1)$ as
\begin{equation}\label{convention-lagrange-approximation-x}
  \m{x}(\tau) \approx \hat{\m{x}}(\tau) = \sum_{j=1}^{N+1} \m{X}_j\ell_j(\tau), \quad   \ell_j(\tau) = \prod_{\substack{l=1\\l\neq j}}^{N+1} \frac{\tau-\tau_l}{\tau_j-\tau_l},
\end{equation}
where $(\tau_1,\ldots,\tau_{N+1})$ are the support points of $\ell_j(\tau),\;(j=1,\ldots,N+1)$.  It is known that the Lagrange polynomials $\ell_j(\tau),\;(j=1,\ldots,N+1)$ satisfy the property
\begin{equation}\label{convention-lagrange-basis-isolation-property}
  \ell_j(\tau_i) = \delta_{ij} = \left\{\begin{array}{lcl} 1 & , & i=j, \\ 0 & , & i\neq j. \end{array} \right. 
\end{equation}
which implies that
\begin{equation}\label{convention-X(tauk)=Xk}
  \m{X}(\tau_i) = \m{X}_i, \quad (i=1,\ldots,N+1).  
\end{equation}
Using aforementioned row vector conventions and function approximations, in this paper the notation $\m{X}_{i:j}$ is a matrix whose rows are the values $(\m{X}_i,\ldots,\m{X}_j)$, that is,
\begin{equation}\label{convention-Xij}
  \m{X}_{i:j} = \left[\begin{array}{c} \m{X}_i \\ \m{X}_{i+1} \\ \vdots \\ \m{X}_j \end{array} \right]
\end{equation}
Furthermore, the notation $\m{A}\tr$ denotes the transpose of a matrix $\m{A}$.  The inner product between two matrices $\m{A}$ and $\m{B}$ of the same size is then denoted $\langle \m{A}, \m{B} \rangle$ and is defined as
\begin{equation}\label{convention-inner-product}
  \langle \m{A}, \m{B} \rangle = \mbox{trace } \m{A}\tr \m{B} .
\end{equation}
Note that, when $\m{A}$ and $\m{B}$ are row vectors, $\langle \m{A}, \m{B} \rangle$ is the standard inner product.

Next, differentiation matrices are used throughout this paper.  
The following conventions will be adopted for the elements of a differentiation matrix $\m{D}$:
\begin{displaymath}
  \begin{array}{lcl}
    \m{D}_{(i,j)} & = & \textrm{element in row }i\textrm{ and column }j, \\
    \m{D}_{(:,i)} & = & \textrm{elements in all rows and column }i, \\ 
    \m{D}_{(i,:)} & = & \textrm{elements all columns and row }i, \\
    \m{D}_{(i:j,k:l)} & = & \textrm{elements in rows }i\textrm{ through }j\textrm{ and columns }k\textrm{ through }l.
  \end{array}
\end{displaymath}
Finally, the following conventions are adopted for functions and their first derivatives (gradients or Jacobians).  First, if $\m{f} : \mathbb{R}^n \rightarrow \mathbb{R}^m$ is a function of the vector $\m{x}\in\mathbb{R}^n$, then $\m{f}(\m{x})$ is given as
\begin{equation}\label{convention-vector-f}
  \m{f}(\m{x}) = \left[f_1(\m{x}),f_2(\m{x}),\ldots,f_m(\m{x})\right]
\end{equation}
Furthermore, the notation $\nabla_{\m{x}} \m{f}(\m{x})$ is defined as
\begin{equation}\label{convention-nabla-f}
  \nabla\m{f}(\m{x}) = \frac{\partial\m{f}}{\partial\m{x}} = \left[\begin{array}{c} \frac{\partial f_1}{\partial \m{x}} \\ \frac{\partial f_2}{\partial\m{x}} \\ \vdots \\\frac{\partial f_m}{\partial\m{x}} \end{array} \right]
  =
  \left[
    \begin{array}{ccc}
      \frac{\partial f_1}{\partial x_1} &   \ldots & \frac{\partial f_1}{\partial x_n} \\
      \frac{\partial f_2}{\partial x_1} &   \ldots & \frac{\partial f_2}{\partial x_n} \\
      \vdots & \ddots & \vdots \\
      \frac{\partial f_m}{\partial x_1} &   \ldots & \frac{\partial f_m}{\partial x_n}
    \end{array}
  \right]
\end{equation}
Then, using the definitions provided in Eqs.~\eqref{convention-vector-f} and \eqref{convention-nabla-f}, if $g:\mathbb{R}^{n\times m}\rightarrow\mathbb{R}$ is a scalar function of the $m\times n$ matrix $\m{X}\in\mathbb{R}^{m\times n}$, then the gradient of $g(\m{X})$ with respect to $\m{X}$, denoted $\nabla_{\m{X}}~g(\m{X})$, is defined as
\begin{equation}
  \nabla_{\m{X}}~g(\m{X}) =
  \left[
    \begin{array}{ccc}
      \frac{\partial g}{\partial  X_{11}} & \ldots & \frac{\partial g}{\partial  X_{1n}} \\ 
      \frac{\partial g}{\partial  X_{21}} & \ldots & \frac{\partial g}{\partial  X_{2n}} \\
      \vdots & \ddots & \vdots \\
      \frac{\partial g}{\partial  X_{m1}} & \ldots & \frac{\partial g}{\partial  X_{mn}}
    \end{array}
  \right].
\end{equation}

\section{Bolza Optimal Control Problem\label{sect:bolza}}

Without loss of generality, consider the following optimal control problem in Bolza form.  Minimize the objective functional 
\begin{equation} \label{eqn:contcost}
  \mathcal{J}  = \mathcal{M}(\m{x}(-1),\m{v}(-1),\m{x}(+1),\m{v}(+1),t_0,t_f) + \frac{t_f - t_0}{2}\int_{-1}^{+1} \mathcal{L}(\m{x}(t),\m{v}(t),\m{u}(t)) dt,
\end{equation}
subject to the dynamic constraints
\begin{equation} \label{eqn:contdyn}
  \begin{array}{lcl}
    \displaystyle\frac{d\m{x}(t)}{dt} & = & \displaystyle \frac{t_f-t_0}{2}\m{f}_x(\m{x}(t),\m{v}(t)), \\
    \displaystyle\frac{d\m{v}(t)}{dt} & = & \displaystyle \frac{t_f - t_0}{2}\m{f}_v(\m{x}(t),\m{v}(t),\m{u}(t)),
  \end{array}
\end{equation}
inequality path constraints
\begin{equation} \label{eqn:contpath}
  \m{c}(\m{x}(t),\m{v}(t),\m{u}(t)) \leq \m{0},
\end{equation}
and the boundary conditions
\begin{equation} \label{eqn:contbound}
  \m{b}(\m{x}(-1),\m{v}(-1),\m{x}(+1),\m{v}(+1),t_0,t_f) = \m{0}.
\end{equation}
It is noted in Eqs.~\eqref{eqn:contcost}--\eqref{eqn:contbound} that $\m{x}(t)\in\R^{n_x}$, $\m{v}(t)\in\R^{n_v}$, and, together, $(\m{x}(t),\m{v}(t))\in\R^n$ is the state (where $n=n_x+n_v$), $\m{u}(t) \in \R^{n_u}$ is the control, $\m{f}_x: \R^{n_x}\times\R^{n_v}\rightarrow\R^{n_x}$, $\m{f}_v: \R^{n_x}\times\R^{n_v}\times\R^{n_u}\rightarrow\R^{n_v}$, $\m{c}: \R^{n_x}\times\R^{n_v}\times \R^{n_u} \rightarrow \R^{n_c}$, $\m{b}: \R^{n_x}\times\R^{n_v}\times \R^{n_x}\times\R^{n_v} \rightarrow \R^{n_b}\times \R \times \R$, $\mathcal{M}: \R^{n_x}\times\R^{n_v}\times \R^{n_x}\times\R^{n_v} \rightarrow \R$, and $\mathcal{L}: \R^{n_x}\times\R^{n_v}\times \R^{n_u}\rightarrow \R \times \R \times \R$.  It is seen from from the optimal control problem in Eq.~\eqref{eqn:contcost}--\eqref{eqn:contbound} that the dynamics are decomposed into those differential equations that depend upon the control and those differential equations that do not depend upon the control.  This decomposition is done deliberately because the modified Legendre-Gauss-Radau collocation method developed in this paper exploits this separation.  It is noted that no generality is lost with such a decomposition because $(n_x,n_v)=(0,n)$ is a special case of the dynamics given in Eq.~\eqref{eqn:contdyn}.

Consider now the following partitioning of the independent variable $t\in[-1,+1]$ into a mesh consisting of $K+1$ mesh points $-1=T_0<T_1<T_2<\ldots<T_K=+1$, where $\mathcal{I}_k=[T_{k-1},T_k]$ corresponds to mesh interval $k\in[1,\ldots,K]$.  Then the Bolza optimal control problem of Section \ref{sect:bolza} can be expressed in multiple-interval form as follows.  Minimize the objective functional
\begin{equation} \label{eqn:contcost-mi}
  \mathcal{J}  = \mathcal{M}(\m{x}^{(1)}(T_0),\m{v}^{(1)}(T_0),\m{x}^{(K)}(T_K),\m{v}^{(K)}(T_K),t_0,t_f) + \frac{t_f - t_0}{2}\sum_{k=1}^{K} \int_{T_{k-1}}^{T_k} \mathcal{L}(\m{x}\rk(t),\m{v}\rk(t),\m{u}\rk(t)) dt,  
\end{equation} 
subject to the dynamic constraints  
\begin{equation} \label{eqn:contdyn-mi}
  \begin{array}{lcl}
    \displaystyle\frac{d\m{x}^{(k)}(t)}{dt} & = & \displaystyle \frac{t_f-t_0}{2}\m{f}_x(\m{x}^{(k)}(t),\m{v}^{(k)}(t)), \\
    \displaystyle\frac{d\m{v}^{(k)}(t)}{dt} & = & \displaystyle \frac{t_f- t_0}{2}\m{f}_v(\m{x}^{(k)}(t),\m{v}^{(k)}(t),\m{u}^{(k)}(t)),  
  \end{array}
\end{equation} 
inequality path constraints  
\begin{equation} \label{eqn:contpath-mi}
  \m{c}(\m{x}^{(k)}(t),\m{v}^{(k)}(t),\m{u}^{(k)}(t)) \leq \m{0},  \quad (k=1,\ldots,K),
\end{equation} 
the boundary conditions  
\begin{equation} \label{eqn:contbound-mi}
  \m{b}(\m{x}^{(1)}(T_0),\m{v}^{(1)}(T_0),\m{x}^{(K)}(T_K),\m{v}^{(K)}(T_K),t_0,t_f) = \m{0},
\end{equation} 
and the state continuity constraint
\begin{equation} \label{eqn:state-continuity}
  \left(\m{x}^{(k)}\left(T_{k}\right),\m{v}^{(k)}\left(T_{k}\right)\right) = \left(\m{x}^{(k+1)}\left(T_{k}\right), \m{v}^{(k+1)}\left(T_{k}\right)\right), \quad (k=1,\ldots,K-1)
\end{equation} 
at the boundaries of the interior mesh intervals.  It is noted that Eq.~\eqref{eqn:state-continuity} ensures continuity in the constraint across the domain $t\in[-1,+1]$ as is assumed in the original formulation of the Bolza optimal control problem stated in Section \ref{sect:bolza}.

The multiple-interval form of the Bolza optimal control problem given in Eqs.~\eqref{eqn:contcost-mi}--\eqref{eqn:state-continuity} is now transformed to the independent variable $\tau\in[-1,+1]$ on each mesh interval $\mathcal{I}_k,\; (k=1,\ldots,K)$.  First, it is seen that $t\in[T_{k-1},T_k]$ can be related to $\tau\in[-1,+1]$ as
\begin{equation}\label{eqn:s-in-terms-of-tau}
  t = \frac{T_k-T_{k-1}}{2}\tau + \frac{T_k+T_{k-1}}{2}
\end{equation}
which further implies that
\begin{equation}\label{eqn:dsdtau}
  \frac{dt}{d\tau} = \frac{T_k-T_{k-1}}{2} \equiv \alpha_k,\quad (k=1,\ldots,K). 
\end{equation}
Consequently, the multiple-interval Bolza optimal control problem given in Eqs.~\eqref{eqn:contcost-mi}--\eqref{eqn:state-continuity} can be written in terms of the variable $\tau$ as follows.  Minimize the objective functional
\begin{equation} \label{eqn:contcost-mi-tau}
  \mathcal{J}  = \mathcal{M}(\m{x}^{(1)}(-1),\m{v}^{(1)}(-1),\m{x}^{(K)}(+1),\m{v}^{(K)}(+1),t_0,t_f) + \frac{t_f - t_0}{2}\sum_{k=1}^{K}\int_{-1}^{+1} \alpha_k \mathcal{L}(\m{x}\rk(\tau),\m{v}\rk(\tau),\m{u}\rk(\tau)) d\tau, 
\end{equation}
subject to the dynamic constraints 
\begin{equation} \label{eqn:contdyn-mi-tau}
  \begin{array}{lclcl}
    \displaystyle\frac{d\m{x}^{(k)}(\tau)}{d\tau} & \equiv & \dot{\m{x}}^{(k)}(\tau) & = & \displaystyle \frac{t_f-t_0}{2}\alpha_k\m{f}_x(\m{x}^{(k)}(\tau),\m{v}^{(k)}(\tau)), \\
    \displaystyle\frac{d\m{v}^{(k)}(\tau)}{d\tau} & \equiv & \dot{\m{x}}^{(k)}(\tau) & = & \displaystyle \frac{t_f - t_0}{2}\alpha_k\m{f}_v(\m{x}^{(k)}(\tau),\m{v}^{(k)}(\tau),\m{u}^{(k)}(\tau)), 
  \end{array}, \quad (k=1,\ldots,K),
\end{equation}
inequality path constraints 
\begin{equation} \label{eqn:contpath-mi-tau}
  \m{c}(\m{x}^{(k)}(\tau),\m{v}^{(k)}(\tau),\m{u}^{(k)}(\tau)) \leq \m{0}, \quad (k=1,\ldots,K),
\end{equation}
the boundary conditions 
\begin{equation} \label{eqn:contbound-mi-tau}
  \m{b}(\m{x}^{(1)}(-1),\m{v}^{(1)}(-1),\m{x}^{(K)}(+1),\m{v}^{(K)}(+1),t_0,t_f) = \m{0}, 
\end{equation}
the state continuity constraint 
\begin{equation} \label{eqn:state-continuity-tau}
  \left(\m{x}^{(k)}(+1), \m{x}^{(k)}(+1)\right) = \left(\m{x}^{(k+1)}(-1), \m{x}^{(k+1)}(-1)\right), \quad (k=1,\ldots,K-1),
\end{equation}

\section{Legendre-Gauss-Radau Collocation\label{sect:LGR}}

The Legendre-Gauss-Radau (LGR) collocation method approximates the multiple-interval form of the Bolza optimal control problem defined in Section \ref{sect:bolza}.  First, it is assumed that the number of collocation points is the same in each mesh interval and is denoted $N$.  Next, let $\tau_i,\;(i=1,\ldots,N)$ be the $N$ Legendre-Gauss-Radau collocation points \cite{Abramowitz1} on the interval $[-1,+1)$ and that $\tau_{N+1}=+1$ is a noncollocated point.  Then, in every mesh interval $k\in[1,\ldots,K]$, the state $(\m{x}^{(k)}(\tau),\m{v}^{(k)}(\tau))$ is approximated as
\begin{equation}\label{state-approximation}
  \begin{array}{lcl}
    \m{x}^{(k)}(\tau) & \approx & \hat{\m{x}}^{(k)}(\tau) = \sum_{j=1}^{N+1} \m{X}_{j}^{(k)} \ell_j (\tau), \\
    \m{v}^{(k)}(\tau) & \approx & \hat{\m{v}}^{(k)}(\tau) = \sum_{j=1}^{N+1} \m{V}_{j}^{(k)} \ell_j (\tau), \\
  \end{array}
\end{equation}
where $\ell_j(\tau)$ are the Lagrange polynomials
\begin{equation}\label{eqn:lagrange-basis-lgr}
  \ell_j(\tau)  = \displaystyle \prod_{\substack{l=1\\j\neq l}}^{N+1} \frac{\tau-\tau_l}{\tau_j-\tau_l}, \qquad \left(j=1,\ldots,N+1\right)
\end{equation}
whose support points are $(\tau_1,\ldots,\tau_{N+1})$.  Differentiating $\m{x}^{(k)}(\tau)$ and $\m{v}^{(k)}(\tau)$ in Eq.~\eqref{state-approximation} gives
\begin{equation}\label{state-derivative-approximation}
  \begin{array}{lcl}
    \dot{\m{x}}^{(k)}(\tau) & \approx & \dot{\hat{\m{x}}}^{(k)}(\tau) = \sum_{j=1}^{N+1} \m{X}_{j}^{(k)} \dot{\ell}_j(\tau), \\
    \dot{\m{v}}^{(k)}(\tau) & \approx & \dot{\hat{\m{v}}}^{(k)}(\tau) = \sum_{j=1}^{N+1} \m{V}_{j}^{(k)} \dot{\ell}_j(\tau). \\
  \end{array}
\end{equation}
Evaluating the functions $\dot{\m{x}}^{(k)}(\tau)$ and $\dot{\m{v}}^{(k)}(\tau)$ at $\tau=\tau_i$ gives
\begin{equation}\label{state-derivative-approximation-at-tauk}
  \begin{array}{lcl}
    \dot{\m{x}}^{(k)}(\tau_i) & \approx & \dot{\hat{\m{x}}}^{(k)}(\tau_i) = \sum_{j=1}^{N+1} \m{X}_j ^{(k)} \dot{\ell}_j(\tau_i) = \sum_{j=1}^{N+1} \m{D}_{(i,j)} \m{X}_{j}^{(k)} , \\
    \dot{\m{v}}^{(k)}(\tau_i) & \approx & \dot{\hat{\m{v}}}^{(k)}(\tau_i) = \sum_{j=1}^{N+1} \m{V}_j ^{(k)} \dot{\ell}_j(\tau_i) = \sum_{j=1}^{N+1} \m{D}_{(i,j)} \m{V}_{j}^{(k)},
  \end{array}
\end{equation}
where the coefficients $\m{D}_{(i,j)},\;(i=1,\ldots,N; \; j=1,\ldots,N+1)$ form the $N\times (N+1)$ {\em LGR differentiation matrix} $\m{D}$.  Next, the matrices $\m{X}^{(k)}\in\R^{(N+1)\times n_x}$ and $\m{V}^{(k)}\in\R^{(N+1)\times n_v}$ correspond row-wise to the state approximations at $(\tau_1,\ldots,\tau_{N+1})$, while the matrix $\m{U}^{(k)}\in\R^{N\times n_u}$ corresponds row-wise to the approximations of the control at $(\tau_1,\ldots,\tau_N)$.  The LGR approximation of the state leads to the following nonlinear programming problem (NLP) that approximates the optimal control problem given in Eqs.~\eqref{eqn:contcost}--\eqref{eqn:contbound}.  Minimize the objective function
\begin{equation}\label{eqn:LGRcost}
  J = \mathcal{M}(\m{X}_1^{(1)},\m{V}_1^{(1)},\m{X}_{N+1}^{(K)},\m{V}_{N+1}^{(K)},t_0,t_f) +\frac{t_f-t_0}{2}\sum_{k=1}^{K}\sum_{i=1}^{N} \alpha_k w_i \mathcal{L}(\m{X}_i^{(k)},\m{V}_i^{(k)},\bmU_i^{(k)}),
\end{equation}
subject to 
\begin{equation} \label{eqn:LGRcon}
  \begin{array}{lcl}
    \m{D}_{(i,:)}\m{X}^{(k)} - \frac{t_f-t_0}{2}\alpha_k\m{f}_x\left(\m{X}_i^{(k)},\m{V}_i^{(k)}\right) &= &  \m{0}, \\
    \m{D}_{(i,:)}\m{V}^{(k)} - \frac{t_f-t_0}{2}\alpha_k\m{f}_v(\m{X}_i^{(k)},\m{V}_i^{(k)},\m{U}_i^{(k)}) &=&  \m{0}, \quad (k=1,\ldots,K), \\
    \m{c}({\m{X}_i^{(k)},\m{V}_i^{(k)},\m{U}_i^{(k)}}) &\leq &  \m{0}, \\
    \m{b}(\m{X}_1^{(1)},\m{V}_1^{(1)},\m{X}_{N+1}^{(K)},\m{V}_{N+1}^{(K)},t_0, t_f) & \leq & \m{0}, \\
    \left(\m{X}_{N+1}^{(k)}, \m{V}_{N+1}^{(k)}\right)  & = & \left(\m{X}_{1}^{(k+1)}, \m{V}_{1}^{(k+1)}\right), \quad (k=1,\ldots,K-1),
  \end{array}
\end{equation}
where $i\in(1,\ldots,N)$.  It is noted in Eq.~\eqref{eqn:LGRcost} that $w_i,\;\; (i = 1,\ldots,N)$ are the LGR quadrature weights.  Equations~\eqref{eqn:LGRcost} and \eqref{eqn:LGRcon} will be referred to as the {\em Legendre-Gauss-Radau collocation method}. 

\section{LGR Collocation and Lavrentiev Phenomenon\label{sect:Lavrentiev-LGR}}

This section provides an overview of Lavrentiev phenomenon.  First, Section \ref{sect:Lavrentiev} provides a discussion of the concept of Lavrentiev phenomenon and how Lavrentiev phenomenon manifests itself when using LGR collocation.  Next, Section \ref{sect:LavrentievGap} provides a discussion of the Lavrentiev gap that arises when solving an optimal control problem whose solution is nonsmooth using LGR collocation.  Finally, Section \ref{sect:SearchSpace-LGR} provide an analysis of the search space using LGR collocation on an example optimal control problem whose solution is nonsmooth.  

\subsection{Lavrentiev Phenomenon\label{sect:Lavrentiev}}

The LGR collocation method described in Section \ref{sect:LGR} is a finite element method that approximates the optimal control problem described in Section \ref{sect:bolza} with a finite-dimensional nonlinear programming problem (NLP).  Ref.~\cite{Ball:1987} described cases where the optimal solution of the finite-dimensional approximation produces a pseudo-minimizer that differs from the true optimal solution.  The behavior described in Ref.~\cite{Ball:1987} is called {\em Lavrentiev phenomenon} \cite{Lavrentiev:1927,Ball:1987} and is important to understand and address when solving an optimal control problem using LGR collocation.  To gain an understanding of Lavrentiev phenomenon, consider the classical least action calculus of variations problem of the form
\begin{equation}\label{eqn:cov-problem}
  \textrm{min } J(x) = \int_{a}^{b} L(x(t),\dot{x}(t),t)dt,\textrm{ subject to }(x(a),x(b))=(x_0,x_f), 
\end{equation} 
Suppose now that $\mathcal{A}(a,b)$ and $\mathcal{W}(a,b)$ are, respectively, the space of absolutely continuous functions and Lipschitz continuous functions on the interval $t\in[a,b]$.  Furthermore, consider particular instances where the minimizer $x^*(t)$ lies in $\mathcal{A}(a,b)$ \cite{Ball:1987}.  For such cases, the minimizer $x^*(t)$ of $J(x)$ has an unbounded derivative at certain points \cite{Ball:1987}, and these singularities may prevent the minimizer from satisfying the classical first-order Euler-Lagrange necessary optimality conditions
\begin{equation}\label{eqn:Euler-Lagrange}
  \frac{\partial L}{\partial x} - \frac{d}{dt}\frac{\partial L}{\partial\dot{x}} = 0,  
\end{equation}
where, in general, the weak form of the Euler-Lagrange equations that is usually satisfied.  

Now, in general it is not possible to solve a calculus of variations problem analytically.  Consequently, the integral in Eq.~\eqref{eqn:cov-problem} must be approximated numerically via quadrature using a finite-element method, and this quadrature approximation leads to a finite-dimensional nonlinear programming problem (NLP) that must be solved using nonlinear optimization solvers \cite{Gill:2005,Biegler:2008}.  As it turns out, when the minimizer $x^*(t)$ lies in $\mathcal{A}(a,b)$ collocation methods typically fail in computing both the correct minimizer $x^*(t)$ and the correct minimizing value of the integral $I(x)$.  To illustrate the failure of the finite element method, consider the following problem \cite{Ball:1987} of minimizing over $\mathcal{A}(0,1)$ the integral
\begin{equation}\label{eqn:LavExample}
  J(x) = \int_{0}^1(x^3(t)-t)^2\dot{x}^6(t)dt\quad,\quad (x(0),x(+1)) = (0,+1), 
\end{equation}
where $x^*(t)=t^{1/3}\in\mathcal{A}(0,1)$ is the unique minimizer and $I(x^*)=0$.  This last fact, namely that $x^*(t)$ lies in $\mathcal{A}(0,1)$, can be connected to the following result from Mani\'{a} \cite{Mania:1934}:
\begin{equation}\label{eqn:LavPhenomenon}
  \inf_{x \in\mathcal{W}(0,1)} J(x) > \inf_{x\in\mathcal{A}(0,1)} J(x) = 0.
\end{equation}
The property defined by Eq.~\eqref{eqn:LavPhenomenon} is called {\em Lavrentiev phenomenon} and shows that the minimizer over $\mathcal{A}(0,1)$ differs from the minimizer over $\mathcal{W}(0,1)$.  Thus, if the minimizer is absolutely continuous while the optimization search is performed over the space of Lipschitz continuous functions, the result of the optimization will be a pseudo-minimizer $\bar{x}\neq x^*$ \cite{Ball:1987}.

The preceding discussion leads into the fact that Lavrentiev phenomenon can create misleading results when employing numerical optimization with a finite-element method.  To see the effect that Lavrentiev phenomenon can have when using a finite element method, consider the following equivalent formulation of the problem in Eq.~\eqref{eqn:LavExample} as the Lagrange optimal control problem
\begin{equation}\label{eqn:LavExampleOC}
  \textrm{min }J(x,u) = \int_{0}^{+1}(x^3(t)-t)^2 u^6(t)dt\quad \textrm{subject to }\left\{
    \begin{array}{lcl}
      \dot{x}(t) & = & u(t), \\
      (x(0),x(+1)) & = & (0,1).
    \end{array}
  \right.
\end{equation}
Figure~\ref{fig:LavExampleOCSolution} shows the exact solution $x^*(t)$ alongside the solution obtained using the multiple-interval LGR collocation method \cite{Kameswaran:2008,Garg:2011a,Garg:2011b,Garg:2010,Darby:2011a,Darby:2011b,Patterson:2015,Liu:2015,Liu:2018} described in Section \ref{sect:LGR} using $N=4$ LGR collocation points in each mesh interval and the NLP solver IPOPT \cite{Biegler:2008}.  Similar to the result obtained in Ref.~\cite{Ball:1987} using midpoint rule integration, Fig.~\ref{fig:LavExampleOCSolution} shows that the LGR approximation does not match the optimal solution $x^*(t)$.  In fact, consistent with the discussion in Ref.~\cite{Ball:1987}, the LGR approximation converges to pseudo-minimizer that differs from $x^*(t)$.  Over the years, the concept of Lavrentiev phenomenon has been expanded beyond those that involve the space of absolutely continuous and Lipschitz continuous functions \cite{Ferriero:2004}.  For instance, Guerra \cite{Guerra:2009} examined the space of a singular arc optimal control problem against the space of the optimal control problem created when the singular problem is regularized.

\begin{figure}[ht]
  \centering
  \includegraphics[scale=0.45]{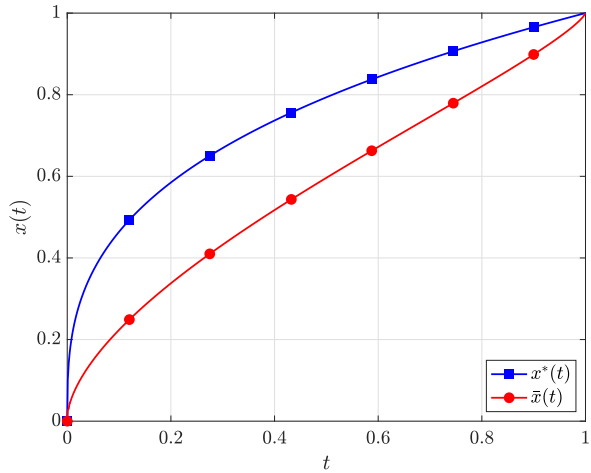}
  \caption{Minimizer $x^*(t)=t^{1/3}$ of Eq.~\eqref{eqn:LavExampleOC} alongside $hp$ LGR $(N=4)$ collocation pseudo-minimizer $\bar{x}(t)$.\label{fig:LavExampleOCSolution}}
\end{figure}

The modification of the LGR collocation method described in Section \ref{sect:LGR} developed in this paper is motivated by the preceding discussion of Lavrentiev phenomenon.  In particular, it was shown in this section that it is essential to perform the optimization over the appropriate search space.  The method developed in this paper focuses on identifying the correct search space when solving an optimal control problem whose solution is nonsmooth.  The remainder of this paper focuses on the aforementioned modification of the LGR collocation method \cite{Kameswaran:2008,Garg:2011a,Garg:2011b,Garg:2010,Darby:2011a,Darby:2011b,Patterson:2015,Liu:2015,Liu:2018} described in Section \ref{sect:LGR}.  

\subsection{Lavrentiev Gap\label{sect:LavrentievGap}}

The main point of Section \ref{sect:Lavrentiev} is that the search space of the optimization problem affects the solution obtained.  If an incorrect search space is used, an incorrect solution may be obtained and the corresponding objective may be either smaller or larger than the optimal objective.  The difference between the incorrect and correct search spaces is called the {\em Lavrentiev gap}.  If the incorrect search space is strictly larger than the correct search space, then the Lavrentiev gap is said to be negative.  On the other hand, if the incorrect search space is strictly smaller than the correct search space, then the Lavrentiev gap is said to be positive.  

This research focuses on the solution of optimal control problems with nonsmooth solutions.  For such problems, it is desired to improve the accuracy in the numerical solution by adjusting the mesh points to coincide with the locations of nonsmoothness in the solution.  In particular, if the mesh points are positioned at the exact locations of nonsmoothness and the solution is smooth on the interior of each mesh interval, then the numerical approximation of the optimal control problem would be smooth.

Typically, the locations of nonsmoothness in the solution of an optimal control problem are not known a priori.  One strategy for computing a numerical approximation of the solution to the optimal control problem is introduce variables in the optimization that correspond to the locations of nonsmoothness and then adjust the values of these variables to improve the accuracy of the approximation.  As will be shown in Section \ref{sect:SearchSpace-LGR}, the idea of adding variables that correspond to the locations of nonsmoothness may lead to an incorrect result because the search space may be larger than the correct search space.  In such a case, the Lavrentiev gap is negative and convergence to an objective value smaller than the optimal value occurs.  To close the gap, additional constraints are required to reduce the size of the search space.    In this paper it is shown that adding a new collocation constraint closes the Lavrentiev gap.  

\subsection{Analysis of Search Space Using LGR Collocation\label{sect:SearchSpace-LGR}}

To show the occurrence of Lavrentiev phenomenon \cite{Ball:1987,Mania:1934} as described in Section \ref{sect:Lavrentiev} and the Lavrentiev gap as described in Section \ref{sect:LavrentievGap}, in this section the search space associated with the LGR collocation method developed in Section \ref{sect:LGR} is analyzed using an example whose solution contains a bang-bang optimal control.  The results obtained studying this bang-bang optimal control problem then sets the stage for the modified LGR collocation method developed in Section \ref{sect:LGRmod}.

\subsection{Motivating Example\label{sect:example}}

Consider the following optimal control problem:
\begin{equation}\label{eqn:example}
  \textrm{min } t_f \textrm{ subject to }
  \left\{
    \begin{array}{lcr}
      \left(\dot{x}(t),\dot{v}(t)\right) & = & \displaystyle\frac{t_f}{2}\left(v(t),u(t)\right), \\
      u(t) & \in & (u_{\min},u_{\max}) = (-1,+1), \\
      (x(-1),x(+1),v(-1),v(+1)) & = & (x_0,v_0,x_f,v_f) = (10,0,0,0).
    \end{array}
  \right.
\end{equation}
The optimal solution to the optimal control problem given in Eq.~\eqref{eqn:example} is
\begin{equation}\label{eqn:disolpos}
  (x^*(t),v^*(t),u^*(t)) =
  \left\{
    \begin{array}{lcl}
      \displaystyle\left(-x_0\frac{(t+1)^2}{2}+x_0,-\sqrt{x_0}(t+1),-1\right)  & , & t\leq t_s, \vspace{3pt} \\
      \displaystyle\left(+x_0\frac{(t-1)^2}{2},+\sqrt{x_0}(t-1),+1\right)  & , & t> t_s, \vspace{3pt} \\
    \end{array}
  \right.
\end{equation}
where $t_s^*= 0$ and $t_f^* = 2\sqrt{x_0}\approx 6.32456$.  It is seen that the $x^*(t)$ is piecewise quadratic while $u^*(t)$ is bang-bang with a single switch at $t = t_s^*=0$.  Given that the state is piecewise quadratic, it should be possible to obtain the exact solution to this example using two intervals.

\subsection{Two-Interval Reformulation of Example:  Lavrentiev Gap\label{sect:example-two-interval}}

Consider now the following two-interval reformulation of the example given in Section \ref{sect:example}:
\begin{equation}\label{eqn:exampleMI}
  \textrm{min } t_f \textrm{ subject to }
  \left\{
    \begin{array}{lcr}
      \left(\dot{x}^{(k)}(\tau),\dot{v}^{(k)}(\tau)\right) & = & \displaystyle\frac{t_f}{2}\alpha_k\left(v ^{(k)}(\tau),u ^{(k)}(\tau)\right), \\
      u^{(k)}(\tau) & \in & (u_{\min},u_{\max}) = (-1,+1), \\
      (x^{(1)}(-1),x^{(2)}(+1),v^{(1)}(-1),v^{(2)}(+1)) & = & (10,0,0,0).
    \end{array}
  \right. \quad (k=1,2)
\end{equation}
where $\alpha_k=(T_k-T_{k-1})/2,\; (k=1,2)$ as given in Eq.~\eqref{eqn:dsdtau} and $T_1$ is a variable in the problem formulation of Eq.~\eqref{eqn:exampleMI} and represents the time at the boundary between the two mesh intervals $\mathcal{I}_1$ and $\mathcal{I}_2$.  Suppose now that the LGR collocation method is used to approximate the two-interval optimal control problem of Eq.~\eqref{eqn:exampleMI}.  Because the optimal trajectory is piecewise quadratic and the LGR quadrature is exact for polynomials of degree at most $2N-2$, it should be possible to obtain the exact solution using $N=2$ collocation points in each subinterval with $T_1$ included as an optimization variable.

Now define the {\em approximate control} as
\begin{equation}\label{eqn:di-approximate-control}
 \hat{u}^{(k)}(\tau) = \frac{2}{t_f}\frac{1}{\alpha_k}\dot{\hat{v}}^{(k)}(\tau), \quad (k=1,2) , 
\end{equation}
where $\hat{v}^{(k)}(\tau)$ is the $N=2$ Lagrange polynomial approximation of the state $v^{(k)}(\tau)$.  Figure~\ref{fig:VarMeshDoubleintegrator} shows the NLP control values $U_i^{(k)},\;(i,k=1,2)$ obtained from solving the NLP using $N=2$ LGR points in each of the two mesh intervals alongside the approximate control given in Eq.~\eqref{eqn:di-approximate-control}.  It is seen that the approximate control bears little resemblance to the known bang-bang structure of the optimal control.  Next, the NLP solver returns a value $T_1\approx -0.3$ which is in significant error from the known optimal value $T_1^*=0$.  Furthermore, in the second mesh interval the approximate control $U^{(2)}(\tau)$ exceeds the upper limit $u_{\max}=+1$ given in the continuous optimal control problem of Eq.~\eqref{eqn:example}.  Finally, the NLP objective is approximately $6.0$ which is less than the optimal objective $2\sqrt{x_0}\approx 6.32456$ of the continuous optimal control problem.  Consequently, including the variable $T_1$ as part of the two-interval formulation results in a misleading solution with regard to the control structure, the objective, and the value of $T_1$.  As a result, the Lavrentiev gap in the formulation of Eq.~\eqref{eqn:exampleMI} is positive which implies that the search space is too large.  The reason that the search space is too large is because in the discrete problem the control constraint is imposed only at the LGR collocation points $(\tau_1,\ldots,\tau_N)$.  As a result, at the final point $\tau_{N+1}=+1$ the approximate control given in Eq.~\eqref{eqn:di-approximate-control} can violate the constraint as shown in Fig.~\ref{fig:MLSearchSpaceCon}.  

\begin{figure}[ht]
\centering
\includegraphics[scale=0.45]{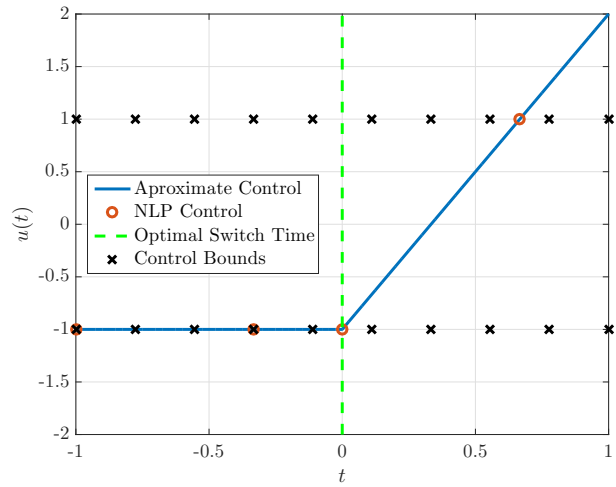}
\caption{Control obtained for two-interval formulation of example given in Eq.~\eqref{eqn:exampleMI}. \label{fig:VarMeshDoubleintegrator}}
\end{figure}

\begin{figure}[h!]
\centering 
\includegraphics[scale=0.45]{}
\caption{Possible approximate control functions for the two-interval LGR approximation given in Eq.~\eqref{eqn:exampleMI} of the continuous optimal control problem given in Eq.~\eqref{eqn:example}.\label{fig:MLSearchSpaceCon}}
\end{figure}


\section{Modified Legendre-Gauss-Radau Collocation\label{sect:LGRmod}}

Using the results of Section \ref{sect:Lavrentiev-LGR}, additional constraints are now augmented to the standard collocation method presented in Section \ref{sect:LGR} in order to improve the approximation of the location of the nonsmoothness in the solution to the optimal control problem (thereby improving the accuracy of the solution itself).  In particular collocation constraints are added at the end of each mesh interval, but such constraints are added to only those differential equations {\em that are a function of control}.  In this manner, and as stated in Section \ref{sect:introduction}, the approach developed in this research differs fundamentally from the approaches developed in Refs.~\cite{Cuthrell:1987} and \cite{Ross:2004}.  To simplify the following discussion, it will be assumed that each mesh interval for any given state discretization contains the same number of collocation points.  Therefore the differentiation matrix for any given state approximation is identical in each mesh interval.

\subsection{New Decision Variables\label{sect:new-variables}}

The modified LGR method introduces the following two new decision variables at the end of each mesh interval $\mathcal{I}_k,\;k=(1,\ldots,K)$.  The first new variables are the interior mesh points $T_{k},\;(k=1,\ldots,K-1)$.   The second new variable is the approximation of the control at the end of each mesh interval.  The value of this control approximation is denoted $\m{U}\rk_{N+1}$, $\left(k = 1,\hdots,K\right)$.  The portion of the decision vector associated with the control in the modified LGR collocation method is then defined as
\begin{displaymath}
  \tilde{\m{U}}\rk = \left[\begin{array}{c} \m{U}\rk \\ \m{U}\rk_{N+1} \end{array} \right].
\end{displaymath}
It is important to note that $\m{U}\rk_{N+1}$ and $\m{U}^{(k+1)}_{1}$ correspond to the same time point $T_k$.  In other words, $\m{U}\rk_{N+1}$ and $\m{U}^{(k+1)}_{1}$ correspond to the control at  $T_k^{-}$ and $T_k^{+}$, respectively.   This last point highlights the fact that the control need not be continuous at a mesh point.
Reiterating, the two new variables in the modified LGR collocation method are the time at the end of each mesh interval, $T_k, \; \left(k = 1,\hdots, K-1\right)$, and the control at the end of each mesh interval, $\m{U}\rk_{N +1}, \; \left(k = 1,\hdots, K-1\right)$.

\subsection{New Constraints}

Given that variables have been added at the end of every mesh interval as described in Section \ref{sect:new-variables}, additional constraints must also be included in order to make the appropriate modifications to the search space.   In particular, collocation constraints are added at the end of each mesh interval using those differential equations that are a function of the control.  To understand why these new collocation constraints are included, consider the second differential equation $\dot{v}(\tau) =  \frac{t_f}{2}\alpha_k u(\tau)$ in the two-interval formulation of the example given in Eq.~\eqref{eqn:exampleMI} of Section \ref{sect:example-two-interval}.  Furthermore, suppose that $\hat{v}(\tau)$ is the Lagrange polynomial approximation of $v(\tau)$ and is a polynomial of degree $N$ in each of the two mesh intervals of the problem formulation given in Section \ref{sect:example}.  Finally, suppose that the constraint $\dot{v}(\tau) = \frac{t_f}{2}\alpha_k  u(\tau)$ is enforced at the $N$ LGR points plus the final point of every mesh interval.  Because $\hat{v}(\tau)$ is a polynomial of degree $N$ in each mesh interval and the differential equation depends upon the control, it is possible to satisfy the $N+1$ conditions
\begin{equation}\label{additional-collocation-double-integrator}
  \dot{\hat{v}}\rk(\tau_i) - \alpha_k \frac{t_f}{2} U_i\rk = 0, \quad (i=1,\ldots,N+1;\;  k=1,2)
\end{equation}
in each mesh interval because the control is a variable in Eq.~\eqref{additional-collocation-double-integrator}.  In other words, $U_{N+1}\rk$ can be varied in order to satisfy Eq.~\eqref{additional-collocation-double-integrator} at the endpoint of the first interval.  Moreover, when adding this collocation condition, it is also necessary to add the constraint that $u_{\min} \leq U_{N+1}\rk \leq u_{\max}$ in order to ensure that the control at the end of every mesh interval satisfies the limits on the control.

The preceding argument leads to a modification of the LGR collocation method for the case where the solution may be nonsmooth.  A collocation condition similar to that given in Eq.~\eqref{additional-collocation-double-integrator} is included along with a constraint that enforces all control bounds at the end of the mesh interval.  Adding a collocation condition at the end of a mesh interval results in a modified LGR differentiation matrix of the form
\begin{equation} \label{eqn:modD}
  \tilde{\m{D}} = 
  \left[
    \begin{array}{c}
      \m{D} \\
      \left[\begin{array}{ccc} \dot{\ell}_{1}(\tau_{N+1}),\ldots,\dot{\ell}_{N+1}(\tau_{N+1})\end{array}\right]
    \end{array}
  \right] \in\mathbb{R}^{(N+1)\times(N+1)}, 
\end{equation}
where is is noted that $\tilde{\m{D}}$ is a matrix of size $(N+1)\times(N+1)$ and the last row of $\tilde{\m{D}}$ is given as
\begin{equation}\label{eqn:last-row-of-Dtilde}
  \tilde{\m{D}}_{(N+1,1:N+1)}= \left[\dot{\ell}_{1}(\tau_{N+1}),\ldots,\dot{\ell}_{N+1}(\tau_{N+1})\right]\in\mathbb{R}^{N+1}.
\end{equation}
It is important to note that the matrix $\tilde{\m{D}}$ is used to collocate those differential equations that depend upon the control.  Furthermore, it is noted that the matrix $\m{D}$ in Eq.~\eqref{eqn:modD} is the standard LGR differentiation matrix as given in Section \ref{sect:LGR} \cite{Garg:2011a,Garg:2011b,Garg:2010}.  Including the new collocation constraint, Eq.~\eqref{eqn:LGRcon} is replaced with
\begin{equation}\label{eqn:modified-LGRcon}
  \begin{array}{lcl}
    \m{D}_{(i,:)}\m{X}\rk -\alpha_k\frac{t_f-t_0}{2}\m{f}_x\left(\m{X}\rk_i,\m{V}\rk_i\right) & = & \m{0}, \quad (i=1,\ldots,N),\\
    \tilde{\m{D}}_{(i,:)}\m{V}\rk - \alpha_k\frac{t_f-t_0}{2}\m{f}_v\left(\m{X}_i\rk,\m{V}_i\rk,\tilde{\m{U}}_{i}\rk\right) & = & \m{0}, \quad (i=1,\ldots,N+1).
  \end{array}
\end{equation}
Observe that, consistent with the explanation provided earlier in this section, the first constraint in Eq.~\eqref{eqn:modified-LGRcon} is {\em not} a function of control and, as a result, is identical to the first constraint given in Eq.~\eqref{eqn:LGRcon}. 

Additional constraints are added for the new $\alpha_k$ decision variable.  These additional constraints are
\begin{align}
\alpha_k &> 0,  \quad \left(k = 1,\hdots, K\right), \\
\sum^{K}_{k = 1}\alpha_k - 1 &= 0.\label{eqn:alphacon2}
\end{align}
These two constraints ensure that each element $\alpha_k,\;\left(k=1,\ldots,K\right),$ is always positive and that the sum is equal to unity.  The objective function given in Eq.~\eqref{eqn:LGRcost}, together with the constraints in Eq.~\eqref{eqn:modified-LGRcon}--\eqref{eqn:alphacon2}, is referred to as the {\em modified Legendre-Gauss-Radau collocation method}.

\subsection{Search Space of Modified LGR Method \label{sect:modSearchSpace}}

The example of Section \ref{sect:example} is now revisited using the modified LGR collocation method.  Figure \ref{fig:SSMod} exhibits the impact of the additional collocation constraint from Eq.~\eqref{eqn:modified-LGRcon} has on the search space of the example problem. 
\begin{figure}[ht!]
\centering
\subfloat[Admissible controls for two-interval formulation of example given in Eqs.~\eqref{eqn:exampleMI} using modified LGR collocation.\label{fig:doubleIntegratorSearchSpaceMod}]{\includegraphics[scale=0.45]{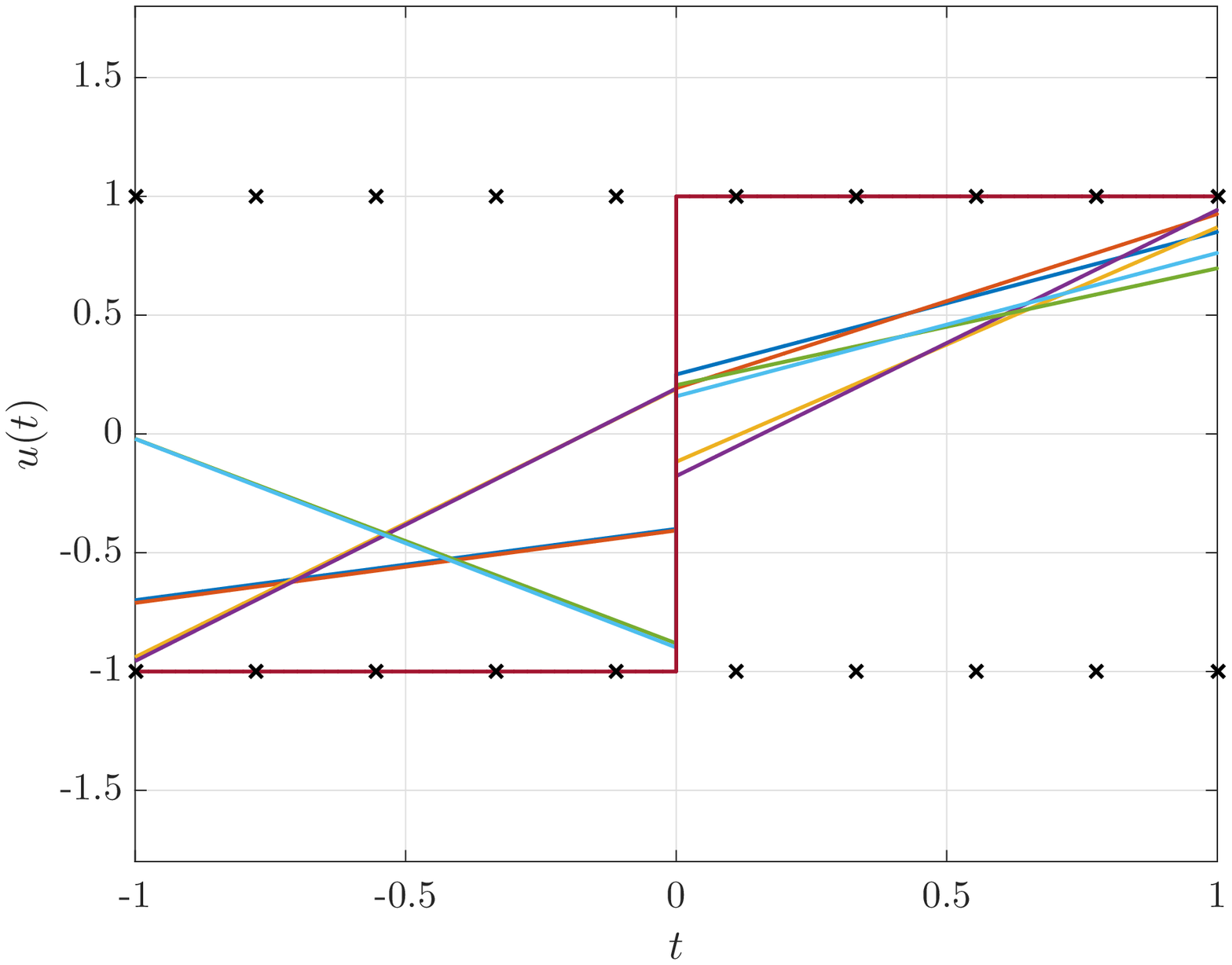}}~~~~\subfloat[Optimal objective, $J^*=t_f^*$, vs.~$T_1$ for two-interval formulation of example given in Eqs.~\eqref{eqn:exampleMI}.\label{Fig:CostVSwitchTime}]{\includegraphics[scale=0.45]{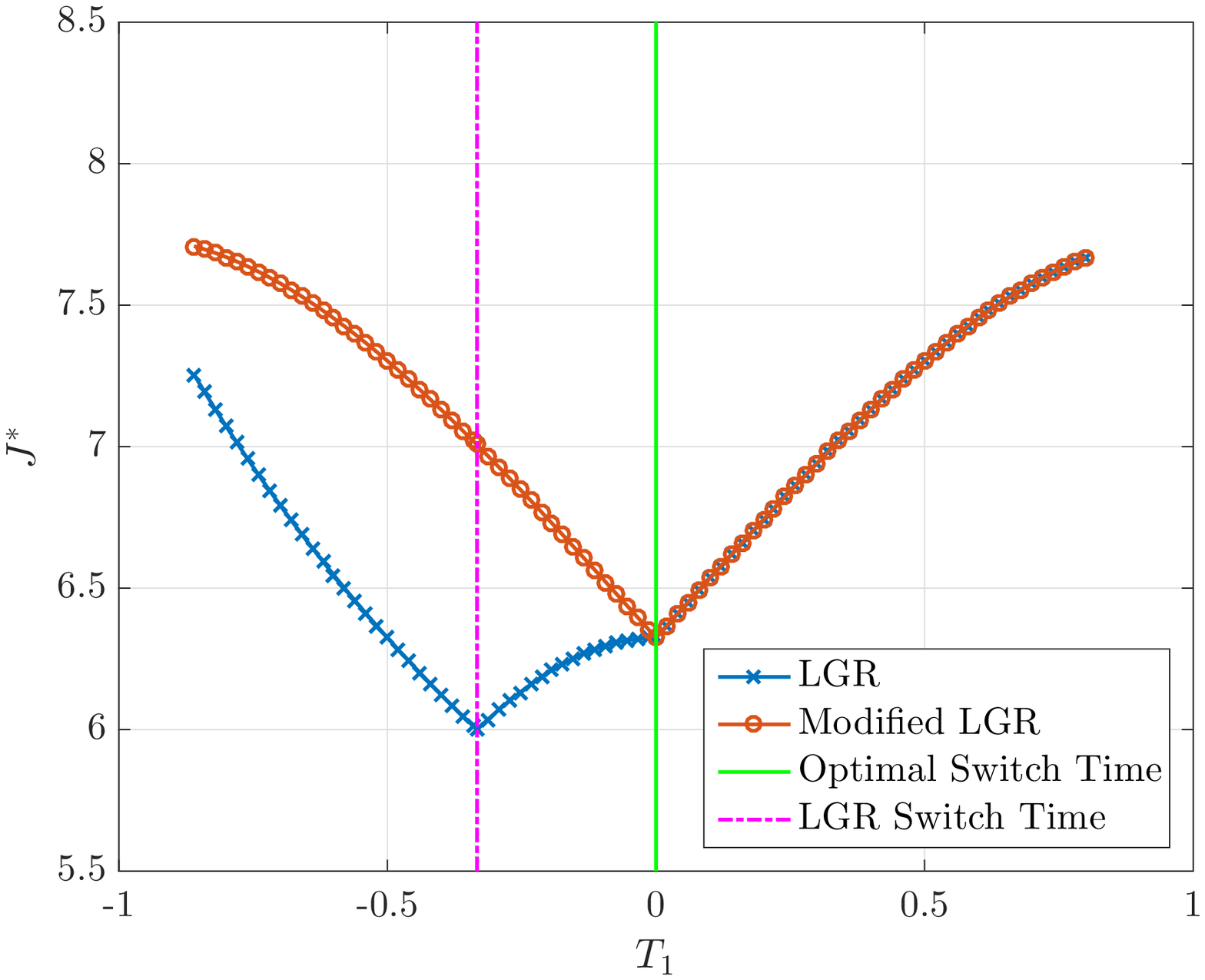}}
\caption{Admissible controls and optimal control for modified LGR collocation and comparison of optimal objective, $J^*$ vs.~switch time $T_1$ for standard and modified LGR collocation.}\label{fig:SSMod}
\end{figure}
Fig.~\ref{fig:doubleIntegratorSearchSpaceMod} demonstrates that each admissible set for control now falls between the allowable control limits $(u_{\min},u_{\max})=(-1,+1)$.  Next, to examine the effect that the modified LGR method has on the solution of the NLP for the example in Section \ref{sect:example}, Fig.~\ref{Fig:CostVSwitchTime} shows the objective of the modified LGR NLP as a function of the switch time, $T_1$, where it is assumed that the switch time is fixed.  At the optimal switch time $T_1^*$, the objective of both the original and modified LGR methods is identical.  Note, however, that when for $T_1 <T_1^*$, the optimal objective of the standard LGR method is {\em smaller} than the modified LGR method.  In fact, Fig.~\ref{Fig:CostVSwitchTime} shows that the optimal objective for the modified LGR method occurs when $T_1 <T_1^*$.  This last result indicates that the modified LGR method reduces the allowable search space such that the solution of the NLP leads to a state approximation that is closer to the solution of the continuous optimal control problem.  Figure~\ref{fig:doubleIntegratorVarMeshLav} shows the control solution obtained by solving for the control as a function of time using the Lagrange polynomial approximation of the state obtained using the modified LGR collocation method. 
\begin{figure}[ht!]
\centering
{\includegraphics[scale=0.45]{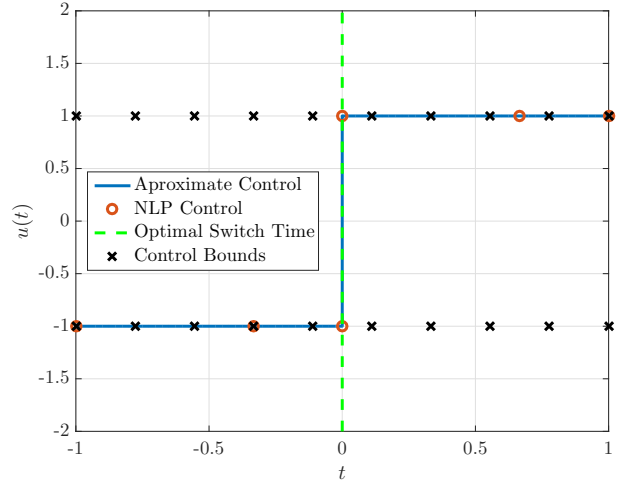}}
\caption{Optimal control for the example defined by Eq.~\eqref{eqn:example} using the modified LGR method.\label{fig:doubleIntegratorVarMeshLav}}
\end{figure}
 It is seen that, not only does the control function lie within its allowable limits $(u_{\min},u_{\max})=(-1,+1)$, but the switch time obtained using the modified LGR collocation method matches the switch time of the solution of the continuous optimal control problem.  

\section{Transformed Adjoint System and Weierstrass-Erdmann Conditions\label{sect:modadjoint}}

This section derives the adjoint system of the modified LGR collocation method based on the optimal control problem given in Eqs.~\eqref{eqn:contcost}, \eqref{eqn:contdyn}, and \eqref{eqn:contbound}.   In order to simplify the derivation, the state and control inequality path constraint given in Eq.~\eqref{eqn:contpath-mi} is dropped.  The first-order optimality conditions for the continuous optimal control problem are given as
\begin{align}\label{eqn:firstorderopt1}
\dot{\g{\lambda}}_x &= -\frac{\partial\mathcal{\mathcal{L}}}{\partial \m{x}} - \g{\lambda}_x\left[\frac{\partial \m{f}_x}{\partial \m{x}}\right]\tr -
\g{\lambda}_v\left[\frac{\partial \m{f}_v}{\partial \m{x}}\right]\tr, \\
\dot{\g{\lambda}}_v &= -\frac{\partial\mathcal{\mathcal{L}}}{\partial \m{v}} -\g{\lambda}_x \left[\frac{\partial \m{f}_x}{\partial \m{v}}\right]\tr - 
{\g{\lambda}_v}\left[\frac{\partial \m{f}_v}{\partial \m{v}}\right]\tr,\\
0&= \frac{\partial\mathcal{\mathcal{L}}}{\partial \m{u}} + {\g{\lambda}_v}\left[\frac{\partial \m{f}_v}{\partial \m{u}}\right]\tr,\label{eqn:firstorderU} \\
\g{\lambda}_x(-1) &=-\frac{\partial \mathcal{M}}{\partial \m{x}(-1)}+\g{\psi}\left[\frac{\partial \m{b}}{\partial \m{x}(-1)}\right]\tr, \label{eqn:firstorderx0}\\
\g{\lambda}_v(-1) &=-\frac{\partial \mathcal{M}}{\partial \m{v}(-1)}+\g{\psi}\left[\frac{\partial \m{b}}{\partial \m{v}(-1)}\right]\tr, \label{eqn:firstorderv0}\\
\g{\lambda}_x(+1) &=\frac{\partial \mathcal{M}}{\partial \m{x}(+1)}-\g{\psi}\left[\frac{\partial \m{b}}{\partial \m{x}(+1)}\right]\tr, \label{eqn:firstorderxf}\\
\g{\lambda}_v(+1) &=\frac{\partial \mathcal{M}}{\partial \m{v}(+1)}-\g{\psi}\left[\frac{\partial \m{b}}{\partial \m{v}(+1)}\right]\tr, \label{eqn:firstordervf}
\end{align}
where $\g{\lambda}_x(\tau)\in\R^{n_x}$ and $\g{\lambda}_v(\tau)\in\R^{n_v}$.   The goal of this section is to derive the first-order optimality conditions, also known as the Karush-Kuhn-Tucker (KKT) conditions, of the modified LGR collocation method. Then, using these first-order optimality conditions, a transformation is derived that relates the dual variables of the modified LGR collocation method to the costates of the continuous optimal control problem.  

\subsection{Derivation of Transformed Adjoint System\label{sect:tas-derivation}}

The derivation of the transformed adjoint system for the modified LGR collocation method proceeds as follows.  First, the Lagrangian associated with the modified LGR collocation constraints of Eq.~\eqref{eqn:modified-LGRcon} is given as
\begin{equation} \label{eqn:ModLGRAugCost}
\begin{aligned}
  J_a &= \mathcal{M}(\m{X}^{(1)}_1,\m{V}^{(1)}_1,\m{X}\rK_{N+1},\m{V}\rK_{N+1},t_0,t_f) + \sum_{k=1}^{K}\alpha_k\frac{t_f-t_0}{2}\sum_{i=1}^{N} w_i \mathcal{L}(\m{X}\rk_i,\m{V}\rk_i,\bmU\rk_i) \\
  & -\sum_{k=1}^{K}\sum_{i=1}^{N} \left\langle\g{\Lambda}_{x_i}\rk,\m{D}_{(i,1:N+1)}\m{X}\rk   - \alpha_k\frac{t_f - t_0}{2}\m{f}_{x}(\m{X}\rk_i,\m{V}\rk_i) \right\rangle \\
&- \sum_{k=1}^{K}\sum^{N+1}_{i = 1}\left\langle{\g{\Lambda}}_{v_i}\rk,\tilde{\m{D}}_{(i,1:N+1)}\m{V}\rk - \alpha_k\frac{t_f - t_0}{2} \m{f}_v(\m{X}\rk_i,\m{V}\rk_i,\m{U}\rk_i)\right\rangle\\
&-\g{\Psi} \m{b}\tr(\m{X}^{(1)}_1,\m{V}^{(1)}_1,\m{X}\rK_{N+1},\m{V}\rK_{N+1},t_0,t_f) - \beta\left(\sum^K_{k=1}\alpha_k-1\right),
\end{aligned}
\end{equation}
where $\bmL\rk_{x} \in \R^{N\times n_x}$, ${\bmL}_v\rk \in \R^{(N+1)\times n_v}$, $\g{\Psi} \in \R^{b}$, $\beta\in\R$ and $\left\langle \cdot, \cdot \right\rangle$ denotes the standard inner product between two vectors.  Furthermore, $\bmL\rk_{x_i}$ and ${\bmL}_{v_i}\rk$ denoted the $i^{th}$ rows of $\bmL\rk_{x}$ and ${\bmL}_v\rk$, respectively.  Rewriting Eq.~\eqref{eqn:ModLGRAugCost} so that the final row of the state matrix is separated from the first $N$ rows gives
\begin{equation}
\begin{aligned}\label{eqn:ModLGRAugCost2}
J_a &= \mathcal{M}(\m{X}^{(1)}_1,\m{V}^{(1)}_1,\m{X}\rK_{N+1},\m{V}\rK_{N+1},t_0,t_f) + \sum_{k=1}^{K}\alpha_k\frac{t_f-t_0}{2}\sum_{i=1}^{N} w_i \mathcal{L}(\m{X}\rk_i,\m{V}\rk_i,\bmU\rk_i) \\
&- \sum_{k=1}^{K}\sum_{i=1}^{N}\left\langle\g{\Lambda}\rk_{x_i},\m{D}_{(i,1:N+1)}\m{X}\rk_{1:N} + \m{D}_{(i,N+1)}\m{X}\rk_{N+1} - \alpha_k\frac{t_f-t_0}{2}\m{f}_x(\m{X}\rk_{i},\m{V}\rk_{i}) \right\rangle \\
&- \sum_{k=1}^{K}\sum_{i=1}^{N}\left\langle{\g{\Lambda}}\rk_{v_i},\m{D}_{(i,1:N)}\m{V}\rk_{1:N} + \m{D}_{(i,N+1)}\m{V}\rk_{N+1} - \alpha_k\frac{t_f-t_0}{2}\m{f}_v(\m{X}\rk_{i},\m{V}\rk_{i},\m{U}\rk_{i})\right\rangle \\
&- \sum_{k=1}^{K}\left\langle{\g{\Lambda}}\rk_{v_{N+1}},\tilde{\m{D}}_{(N+1,1:N)}\m{V}_{1:N}\rk +\tilde{\m{D}}_{(N+1,N+1)}\m{V}_{N+1}\rk\right\rangle\\
& + \sum_{k=1}^{K}\left\langle{\g{\Lambda}}\rk_{v_{N+1}},\alpha_k\frac{t_f-t_0}{2}\m{f}_v(\m{X}\rk_{N+1},\m{V}\rk_{N+1},\m{U}\rk_{N+1})\right\rangle\\
&-\g{\Psi} \m{b}\tr(\m{X}^{(1)}_1,\m{V}^{(1)}_1,\m{X}\rK_{N+1},\m{V}\rK_{N+1},t_0,t_f)\\
&-\beta\left(\sum^K_{k=1}\alpha_k-1\right),
\end{aligned}
\end{equation}
Next, the following theorem is introduced that will allow the terms involving $\m{f}_v(\m{X}\rk_{N+1}$, $\m{V}\rk_{N+1}$, $\m{U}\rk_{N+1})$, and $\tilde{\m{D}}_{(N+1,:)}$ in Eq.~\eqref{eqn:ModLGRAugCost2} to be written as functions of $\m{X}\rk_{1:N}$, $\m{V}\rk_{1:N}$, $\m{U}\rk_{1:N}$, and $\m{D}_{(:,N+1)}$.
\begin{theorem} \label{th:intfldot}
  Let $f(\tau)$ be a polynomial of degree at most $N-1$ on the interval $\tau\in[-1, 1]$.  Furthermore, let $(\tau_1,\ldots,\tau_N)$ be the Legendre-Gauss-Radau points on the interval $[-1, 1)$ and let $\tau_{N+1} = +1$.  Then, if $\ell_j(\tau)$ are the Lagrange polynomials given in Eq.~\eqref{eqn:lagrange-basis-lgr}, it is the case that
\begin{equation} \label{eqn:Dnp1theorem}
  \int^{+1}_{-1} f(\tau)\dot{\ell}_{N+1}(\tau) d\tau = f(+1).
\end{equation}
\end{theorem}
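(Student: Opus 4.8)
The plan is to prove the identity by integration by parts, converting the integral of $f\dot\ell_{N+1}$ into a boundary term plus an integral of $\dot f\,\ell_{N+1}$, and then to show that the second integral vanishes by invoking the exactness of the Legendre-Gauss-Radau quadrature.

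First I would write
\[
  \int_{-1}^{+1} f(\tau)\dot\ell_{N+1}(\tau)\,d\tau
  = \bigl[f(\tau)\ell_{N+1}(\tau)\bigr]_{-1}^{+1}
  - \int_{-1}^{+1}\dot f(\tau)\ell_{N+1}(\tau)\,d\tau .
\]
For the boundary term I would use the interpolation property $\ell_{N+1}(\tau_j)=\delta_{(N+1)j}$ from Eq.~\eqref{convention-lagrange-basis-isolation-property}. Since $\tau_{N+1}=+1$ and, because the LGR nodes on $[-1,+1)$ include the left endpoint, $\tau_1=-1$, this gives $\ell_{N+1}(+1)=1$ and $\ell_{N+1}(-1)=0$. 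Hence the boundary term collapses to exactly $f(+1)$.

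The remaining step is to show that the second integral is zero. Here I would count degrees: $f$ has degree at most $N-1$, so $\dot f$ has degree at most $N-2$, while $\ell_{N+1}$ has degree $N$; thus the product $\dot f\,\ell_{N+1}$ is a polynomial of degree at most $2N-2$. Because the $N$-point LGR quadrature is exact for polynomials of degree at most $2N-2$, the integral equals $\sum_{i=1}^{N} w_i\,\dot f(\tau_i)\,\ell_{N+1}(\tau_i)$. But each collocation node $\tau_i$ $(i=1,\ldots,N)$ differs from $\tau_{N+1}$, so $\ell_{N+1}(\tau_i)=\delta_{(N+1)i}=0$ and the entire quadrature sum vanishes. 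Combining the boundary term with the vanishing integral yields the claimed value $f(+1)$.

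The argument involves no serious obstacle; the only point requiring genuine care is the degree bookkeeping that certifies the product $\dot f\,\ell_{N+1}$ lies within the exactness range of the quadrature. If $f$ were permitted degree $N$ rather than $N-1$, the product would have degree $2N-1$, the quadrature identity would no longer hold, and the argument would collapse — so the hypothesis $\deg f\le N-1$ is precisely what the proof consumes, and I would flag this explicitly to make clear why the bound is sharp.
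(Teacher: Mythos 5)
Your proposal is correct and follows essentially the same route as the paper's own proof: integration by parts, exactness of the $N$-point LGR quadrature for the degree-$2N{-}2$ product $\dot f\,\ell_{N+1}$, and the vanishing of $\ell_{N+1}$ at the collocation nodes. The only difference is cosmetic — you make explicit that $\ell_{N+1}(-1)=0$ because $\tau_1=-1$ is an LGR node, a step the paper leaves implicit when it collapses the boundary term to $f(+1)$.
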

\begin{proof}
  From Eq.~\eqref{eqn:lagrange-basis-lgr}, the Lagrange polynomial $\ell_{N+1}(\tau)$ is given as
  \begin{equation}
  \ell_{N+1}(\tau) = \prod_{l=1}^{N} \frac{\tau - \tau_l}{\tau_{N+1} - \tau_l}. 
\end{equation}
Then the left-hand side of Eq.~\eqref{eqn:Dnp1theorem} can be integrated by parts as
\begin{equation} \label{eqn:DnpProof1}
 \int^{+1}_{-1} f(\tau)\dot{\ell}_{N+1}(\tau) d\tau  = f(\tau)\ell_{N+1}(\tau) \Big|^{+1}_{-1} - \int^{+1}_{-1} \dot{f}(\tau) \ell_{N+1}(\tau)d\tau.
\end{equation}
Because $f(\tau)$ is a polynomial of degree at most $N-1$, it follows that $\dot{f}(\tau)$ is a polynomial of degree at most $N-2$.  Furthermore, because $\ell_{N+1}(\tau)$ is a polynomial of at most degree $N$, then the integrand in Eq.~\eqref{eqn:DnpProof1}  is at most degree $2N-2$ and the integral can be evaluated exactly using LGR quadrature as
\begin{equation} \label{eqn:DnpProof2}
  \intoo \dot{f}(\tau) \ell_{N+1}(\tau) d\tau = \sum^{N}_{i = 1} w_i \dot{f}(\tau_i) \ell_{N+1}(\tau_i),  
\end{equation} 
where $w_i$ is the $i^{th}$ LGR quadrature weight.  Then, from Eq.~\eqref{convention-lagrange-basis-isolation-property}, every term $\ell_{N+1}(\tau_i)=0,\; (i\neq N+1)$ is zero which implies that
\begin{equation} \label{eqn:DnpProof3}
  \int^{+1}_{-1} f(\tau)\dot{\ell}_{N+1}(\tau) d\tau  = f(\tau)\ell_{N+1}(\tau) \Big|^{+1}_{-1}
\end{equation}
Consequently, Eq.~\eqref{eqn:DnpProof1} reduces to
\begin{equation}
  \int^{+1}_{-1} f(\tau)\dot{\ell}_{N+1}(\tau) d\tau = f(\tau)\ell_{N+1}(\tau) \Big|^{+1}_{-1}  = f(+1)\ell_{N+1}(+1) - f(-1)\ell_{N+1}(-1) = f(+1).
\end{equation}
\end{proof}
{\noindent}The result of Theorem \ref{th:intfldot} enables expressing the elements $\tilde{\m{D}}_{(N+1,j)},\; (j=1,\ldots,N)$ in Eq.~\eqref{eqn:modD} in terms of $\m{D}_{(:,N+1)}$ and $\m{D}_{(:,1:N)}$.  First, the $N$ elements of $\tilde{\m{D}}_{(N+1,j)},\; (j=1,\ldots,N)$ are defined as
\begin{equation} \label{eqn:Li}
  \tilde{\m{D}}_{(N+1,j)} = \dot{\ell}_j(+1), \quad \left(\jl\right).
\end{equation}
Then, replacing $f(\tau)$ in Eq.~\eqref{eqn:Dnp1theorem} with $\dot{\ell}_j(\tau),\;(j=1,\ldots,N)$, the quantities $\tilde{\m{D}}_{(N+1,j)},\; (j=1,\ldots,N)$ are given as
\begin{equation} \label{eqn:LiLf}
  \tilde{\m{D}}_{(N+1,j)} = \intoo \dot{\ell}_j(\tau)\dot{\ell}_{N+1}(\tau)d\tau, \quad \left(j = 1,\hdots, N\right).
\end{equation}
Because $\dot{\ell}_j(\tau)\dot{\ell}_{N+1}(\tau)$ is a polynomial of degree most $2N-2$, Eq.~\eqref{eqn:LiLf} can be replaced exactly with an LGR quadrature as
\begin{equation} \label{eqn:LiLf2}
  \tilde{\m{D}}_{(N+1,j)} = \sum^{N}_{i=1}w_i\dot{\ell}_j(\tau_i)\dot{\ell}_{N+1}(\tau_i), \quad \left(j = 1,\hdots, N\right).
\end{equation}
Noting that $\m{D}_{(i,N+1)}=\dot{\ell}_{N+1}(\tau_i)$ and that $\m{D}_{(i,j)} = \dot{\ell}_j(\tau_i)$, Eq.~\eqref{eqn:LiLf2} can be written as
\begin{equation} \label{eqn:LiLf3}
  \tilde{\m{D}}_{(N+1,j)} = \sum^{N}_{i=1}w_i\m{D}_{(i,j)}\m{D}_{(i,N+1)} = \m{D}_{(:,N+1)}\tr \m{W}\m{D}_{(:,j)}, \quad (j=1,\ldots,N),
\end{equation}
where $\m{W}=\textrm{diag}(w_1,\ldots,w_N)$ is a diagonal matrix of LGR quadrature weights $(w_1,\ldots,w_N)$.  The $N$ quantities $\tilde{\m{D}}_{(N+1,1:N)}$ given in Eq.~\eqref{eqn:LiLf3} can be written in a single equation as 
\begin{equation}\label{eqn:LiLf4}
  \tilde{\m{D}}_{(N+1,1:N)} = \m{D}_{(:,N+1)}\tr\m{W}\m{D}_{(:,1:N)}. 
\end{equation}

Now suppose that $(\m{X}_i\rk,\m{V}_i\rk,\m{U}_i\rk),\; (i=1,\ldots,N+1)$ satisfy the constraints given in Eq.~\eqref{eqn:modified-LGRcon}.  Then, in the case $i=N+1$
\begin{equation}\label{eqn:modified-lgr-collocation-on-v-at-tau=+1}
  \alpha_k \frac{t_f-t_0}{2}\m{f}_v\left(\m{X}_{N+1}\rk,\m{V}_{N+1}\rk,\m{U}_{N+1}\rk\right) = \sum_{j=1}^{N+1} \dot{\ell}_j(\tau_{N+1})\m{V}_j\rk,
\end{equation}
where $\tau_{N+1}=+1$.  Now, let $f(\tau)$ in Theorem \ref{th:intfldot} be chosen as the vector function
\begin{equation}\label{f(tau)-theorem-1}
  \m{F}(\tau) = \sum_{j=1}^{N+1}\dot{\ell}_j(\tau)\m{V}_J\rk, 
\end{equation}
where it is noted in Eq.~\eqref{f(tau)-theorem-1} that $\m{F}(\tau)$ is a polynomial of degree at most $N-1$.  Then, the result Eq.~\eqref{eqn:DnpProof2} gives
\begin{equation}\label{f(+1)-theorem-1}
  \m{F}(+1) =  \int_{-1}^{+1} \dot{\ell}_{N+1}(\tau) \m{F}(\tau) d\tau = \sum_{j=1}^{N+1} \ell_j(+1)\m{V}_j\rk.
\end{equation}
Next, LGR quadrature is exact for a polynomial of degree at most $2N-2$ and because $(\m{X}_i\rk,\m{V}_i\rk,\m{U}_i\rk),\; (i=1,\ldots,N+1)$ satisfy the constraints in  Eq.~\eqref{eqn:modified-LGRcon}.  Therefore, the integral in Eq.~\eqref{f(+1)-theorem-1} can be replaced with
\begin{equation}\label{integral-of-fdot-replaced-by-LGR-quadrature}
  \int_{-1}^{+1} \dot{\ell}_{N+1}(\tau) \m{F}(\tau) d\tau = \sum_{i=1}^{N} w_i \dot{\ell}_{N+1}(\tau_i) \m{F}(\tau_i),
\end{equation}
which implies that
\begin{equation}\label{eqn:modified-lgr-collocation-on-v-at-tau=+1-rewritten}
  \sum_{j=1}^{N+1} \ell_j(+1)\m{V}_j\rk = \sum_{i=1}^{N} w_i \dot{\ell}_{N+1}(\tau_i) \m{F}(\tau_i) = \alpha_k \frac{t_f-t_0}{2}\sum_{i=1}^{N} w_i \dot{\ell}_{N+1}(\tau_i)\m{f}_v\left(\m{X}_{i}\rk,\m{V}_{i}\rk,\m{U}_{i}\rk\right).
\end{equation}
Combining Eqs.~\eqref{eqn:modified-lgr-collocation-on-v-at-tau=+1} and \eqref{eqn:modified-lgr-collocation-on-v-at-tau=+1-rewritten} gives
\begin{equation}\label{eqn:DWDf}
  \m{f}_v \left(\m{X}_{N+1}\rk,\m{V}_{N+1}\rk,\m{U}_{N+1}\rk\right) = \m{D}_{(:,N+1)}\tr\m{W}\m{f}_v \left(\m{X}_{1:N}\rk,\m{V}_{1:N}\rk,\m{U}_{1:N}\rk\right). 
\end{equation}
where
\begin{displaymath}
  \m{f}_v(\m{X}\rk_{1:N},\m{V}\rk_{1:N},\m{U}\rk_{1:N}) \equiv \left[\begin{array}{c}\m{f}_v(\m{X}\rk_{1},\m{V}\rk_{1},\m{U}\rk_{1}) \\ \vdots \\ \m{f}_v(\m{X}\rk_{N},\m{V}\rk_{N},\m{U}\rk_{N}) \end{array}\right].
\end{displaymath}
Then, subsituting the identities given in Eqs.~\eqref{eqn:LiLf4} and \eqref{eqn:DWDf} into the Lagrangian of Eq.~\eqref{eqn:ModLGRAugCost2}  gives
%
\begin{equation}
\begin{aligned}
J_a &= \mathcal{M}(\m{X}^{(1)}_1,\m{V}^{(1)}_1,\m{X}\rK_{N+1},\m{V}\rK_{N+1},t_0,t_f) + \sum^K_{k=1}\alpha_k\frac{t_f-t_0}{2}\sum_{i=1}^{N} w_i \mathcal{L}(\m{X}\rk_i,\m{V}\rk_i,\bmU\rk_i) \\
&- \sum^K_{k=1}\sum_{i=1}^{N}\left\langle\g{\Lambda}\rk_{x_i},\m{D}_{(i,1:N)}\m{X}\rk_{1:N}+\m{D}_{(i,N+1)}\m{X}\rk_{N+1} - \alpha_i\rk\frac{t_f - t_0}{2}\m{f}_{x}(\m{X}\rk_{i},\m{V}\rk_{i})\right\rangle \\
&- \sum^K_{k=1}\sum_{i=1}^{N}\left\langle{\g{\Lambda}}\rk_{v_i},\m{D}_{(i,1:N)}\m{V}\rk_{1:N} - \alpha_k\frac{t_f - t_0}{2}\m{f}_v(\m{X}\rk_{i},\m{V}\rk_{i},\m{U}\rk_{i})\right\rangle \\
&- \sum^K_{k=1}\sum_{i=1}^{N}\left\langle{\g{\Lambda}}\rk_{v_i},\m{D}_{(i,N+1)}\m{f}\rk_{v_{N+1}}\right\rangle \\
&- \sum^K_{k=1}\left\langle{\g{\Lambda}}\rk_{v_{N+1}},\m{D}_{(:,N+1)}\tr{\m{W}}\m{D}_{(1:N,:)}\m{V}\rk_{1:N} +\tilde{\m{D}}_{(N+1,N+1)}\m{V}\rk_{N+1}\right\rangle \\
&+ \sum^K_{k=1}\left\langle{\g{\Lambda}}_{v_{N+1}},\alpha_i\rk\frac{t_f - t_0}{2}\left(\m{D}_{(:,N+1)}\tr\m{W}\m{f}_v(\m{X}\rk_{1:N},\m{V}\rk_{1:N},\m{U}\rk_{1:N})\right)\right\rangle\\
&-\g{\Psi} \m{b}\tr(\m{X}^{(1)}_1,\m{V}^{(1)}_1,\m{X}\rK_{N+1},\m{V}\rK_{N+1},t_0,t_f)\\
&-\beta\left(\sum^K_{k=1}\alpha_k-1\right). \\
\end{aligned}
\end{equation}

Now, to simplify the derivations that follow, the following substitutions will be made:
\begin{equation}
  \begin{array}{lclclclclcl}
    \mathcal{L}_i\rk = \mathcal{L}(\m{X}\rk_i,\m{V}\rk_i,\m{U}\rk_i), \\
    \m{f}_{x_i}\rk = \m{f}_x(\m{X}\rk_i,\m{V}\rk_i), \\
    \m{f}_{v_i}\rk = \m{f}_v(\m{X}\rk_i,\m{V}\rk_i,\m{U}\rk_i).
  \end{array}
\end{equation}
The KKT conditions are then derived by taking the partial derivatives $J_a$ with respect to $\m{X}\rk$, $\m{V}\rk$, $\bmU\rk$, $\g{\Lambda}_x\rk$,${\g{\Lambda}}_v\rk$, $\g{\Psi}\rk$, $t_0$, $t_f$ and $\alpha_k$ and setting them equal to zero.  These derivatives are given as follows:
\begin{eqnarray} \label{eqn:LGRconAdj1}
  \m{D}_{(i,:)}\m{X}\rk -\alpha_k\frac{t_f-t_0}{2}\m{f}_x\left(\m{X}\rk_i,\m{V}\rk_i\right) & = & \m{0}, \quad (i=1,\ldots,N),\\
  \tilde{\m{D}}_{(i,:)}\m{V}\rk - \alpha\rk\frac{t_f-t_0}{2}\m{f}_v\left(\m{X}_i\rk,\m{V}_i\rk,\tilde{\m{U}}_{i}\rk\right) & = & \m{0}, \quad (k=1,\ldots,N+1), \label{eqn:LGRconAdj2}\\
  \m{b}(\m{X}^{(1)}_1,\m{V}^{(1)}_1,\m{X}\rK_{N+1},\m{V}\rK_{N+1},t_0,t_f)&= & \m{0}, \label{eqn:LGRBoundAdj}\\
  \sum_{k=1}^K \alpha_k -1 &= & 0 \label{eqn:alphcon},
\end{eqnarray}
\begin{equation}\label{eqn:modLGRKKTX}
\begin{split}
\m{D}_{(i,:)}\tr\g{\Lambda}\rk_x &=\alpha_k\frac{t_f - t_0}{2} \nabla_{\m{X}}\left(w_i \mathcal{L}\rk_i +  \left\langle {\g{\Lambda}}\rk_{x_i}  ,\m{f}\rk_{x_i}\right\rangle+\left\langle {\g{\Lambda}}\rk_{v_i} + {\g{\Lambda}}\rk_{v_{N+1}} \m{D}_{(i,N+1)} w_i ,\m{f}\rk_{v_i}\right\rangle\right)\\
&-\delta_{1i} (-\nabla_{\m{X}}\mathcal{M}+\nabla_{\m{X}}\g{\Psi}\m{b}\tr),
\end{split}
\end{equation}
\begin{equation}\label{eqn:modLGRKKTXnp1}
\begin{split}
\m{D}_{(:,N+1)}\tr\g{\Lambda}_x &= \nabla_{\m{X}}\mathcal{M}-\nabla_{\m{X}}\g{\Psi}\m{b}\tr,
\end{split}
\end{equation}
\begin{equation}\label{eqn:modLGRKKTV}
\begin{split}
  \m{D}_{(i,:)}\tr\left({\g{\Lambda}}_{v_{1:N}}\rk + {\g{\Lambda}}\rk_{v_{N+1}} \m{D}_{(i,N+1)} w_i\right)&=\alpha_k\frac{t_f - t_0}{2}\nabla_{\m{V}}\left(w_i \mathcal{L}\rk_i+\left\langle {\g{\Lambda}}\rk_{x_i}  ,\m{f}\rk_{x_i}\right\rangle\right) \\
  & +\alpha_k\frac{t_f - t_0}{2}\nabla_{\m{V}}\left(\left\langle{\g{\Lambda}}_{v_i}\rk + {\g{\Lambda}}_{v_{N+1}}\rk \m{D}_{(i,N+1)} w_i ,\m{f}\rk_{v_i}\right\rangle\right)\\
&+\g{\Lambda}\rk_i-{\delta}_{1i}(-\nabla_{\m{V}}\mathcal{M} + \nabla_{\m{V}}\g{\Psi}\m{b}\tr),
\end{split}
\end{equation}
\begin{equation}\label{eqn:modLGRKKTVnp1}
\begin{split}
\m{D}_{(:,N+1)}\tr{\g\Lambda}_{v_{1:N}}+\tilde{\m{D}}_{(N+1,N+1)}{\g{\Lambda}}_{v_{N+1}}\rK &= \nabla_{\m{V}}\mathcal{M}-\nabla_{\m{V}}\g{\Psi}\m{b}\tr,
\end{split}
\end{equation}
\begin{equation}\label{eqn:modLGRKKTU}
\begin{split}
\m{0} &= \alpha_k\frac{t_f - t_0}{2}\nabla_{\m{U}}\left(w_i\mathcal{L}\rk_i - \left\langle{\g{\Lambda}}_{v_{1:N}}\rk +{\g{\Lambda}}\rk_{v_{N+1}} \m{D}_{(i,N+1)}w_i,\m{f}\rk_{v_{1:N}}\right\rangle\right),\\
& (k= 1,\hdots,K;\;i = 1,\hdots,N),
\end{split}
\end{equation}
\begin{equation}\label{eqn:modLGRKKTt0}
\begin{split}
  0 &=\sum^K_{k = 1}\frac{-\alpha_k}{2}\sum_{i=1}^{N} w_i \mathcal{L}_i\rk + \sum_{i=1}^{N}\left\langle\g{\Lambda}\rk_{x_i}, \frac{-\alpha_k}{2}\m{f}\rk_{x_i}\right\rangle + \sum_{i=1}^{N}\left\langle{\g{\Lambda}}\rk_{v_i},\frac{-\alpha_k}{2}\m{f}_{v_i}\rk\right\rangle \\
  & + \left\langle{\g{\Lambda}}\rk_{v_{N+1}},\frac{-\alpha_k}{2}\left(\m{D}\tr_{N+1}\m{W}\m{f}\rk\right)\right\rangle+\nabla_{t_0}\left(\mathcal{M} - \g{\Psi}\m{b}\tr\right),
\end{split}
\end{equation}
\begin{equation}\label{eqn:modLGRKKTtf}
\begin{split}
  0 &=\sum_{k=1}^K\frac{\alpha_k}{2}\sum_{i=1}^{N} w_i \mathcal{L}\rk_i + \sum_{i=1}^{N}\left\langle\g{\Lambda}\rk_{x_i}, \frac{\alpha_k}{2}\m{f}\rk_{x_i}\right\rangle + \sum_{i=1}^{N}\left\langle{\g{\Lambda}}\rk_{v_i},\frac{\alpha_k}{2}\m{f}_{v_i}\rk\right\rangle \\
  & + \left\langle{\g{\Lambda}}\rk_{v_{N+1}},\frac{\alpha_k}{2}\left(\m{D}\tr_{N+1}\m{W}\m{f}_v\rk\right)\right\rangle+\nabla_{t_f}\left(\mathcal{M} - \g{\Psi}\m{b}\tr\right),
\end{split}
\end{equation}
\begin{equation}\label{eqn:modLGRKKTa}
\begin{split}
  0 &=\frac{t_f - t_0}{2}\sum_{i=1}^{N} w_i \mathcal{L}\rk_i + \sum_{i=1}^{N}\left\langle\g{\Lambda}\rk_{x_i}, \frac{t_f - t_0}{2}\m{f}\rk_{x_i}\right\rangle + \sum_{i=1}^{N}\left\langle{\g{\Lambda}}\rk_{v_i},\frac{t_f - t_0}{2}\m{f}_{v_i}\rk\right\rangle \\
  & + \left\langle{\g{\Lambda}}\rk_{v_{N+1}},\frac{t_f - t_0}{2}\left(\m{D}\tr_{N+1}\m{W}\m{f}_v\rk\right)\right\rangle - \beta \qquad (k = 1,\hdots,K),
\end{split}
\end{equation}
where $\delta_{ij}$ is the Kronecker delta function defined as
\begin{equation}
\delta_{ij} = \begin{cases}
1, \quad i = j \\
0, \quad i\neq j.
\end{cases}
\end{equation}  
The KKT conditions given in equation \eqref{eqn:modLGRKKTa} are unique to the modified LGR method and is not required for an extremal solution of the standard LGR NLP transcription. Now propose the change of variables
\begin{align}
\label{eqn:modLGRSubX}
{\g{\lambda}}\rk_{x_i} &= \frac{\g{\Lambda}_{x_{i}}\rk}{w_i}, \\
\label{eqn:modLGRSubXnp1}
\g{\lambda}\rk_{x_{N+1}} &= \m{D}_{(:,N+1)}\tr\g{\Lambda}\rK_{x_{1:N}},\\ 
\label{eqn:modLGRSubPsi}
\g{\psi}_i &= \g{\Psi}_i,\\
\label{eqn:modLGRSubV}
{\g{\lambda}}\rk_{v_i} &= \frac{{\g{\Lambda}}_{v_i}\rk}{w_i} + {\g{\Lambda}}\rk_{v_{N+1}}\m{D}_{(i,N+1)},\\
\label{eqn:modLGRSubVnp1}
{\g{\lambda}}_{v_{N+1}}\rK &= \m{D}_{(:,N+1)}\tr{\g{\Lambda}}\rK_{v_{1:N}}+{\g{\Lambda}}_{v_{N+1}} \tilde{\m{D}}_{(N+1,N+1)}.
\end{align}
Note that Eqs.~\eqref{eqn:modLGRSubX}--\eqref{eqn:modLGRSubPsi} are the same transformations used for the standard LGR method.  Finally, define $\m{D}^\dagger \in R^{N\times N}$ such that
\begin{align}
\label{eqn:Ddagger}
\m{D}_{(1,1)}^{\dagger} &= -\m{D}_{(1,1)} - \frac{1}{w_1} \\
\m{D}_{(i,j)}^{\dagger} &= -\frac{w_j}{w_i}\m{D}_{(j,i)} \quad \text{otherwise},
\end{align}
for $i = j = 1,2, \hdots,N$. Note that $\m{D}^{\dagger}$ is the same matrix derived by Refs.~\cite{Garg:2011a,Garg:2011b} where it was shown that $\m{D}^{\dagger}$ is the differentiation matrix for the space of polynomials of degree at most $N-1$.  Now the KKT conditions can be rewritten as
\begin{align}
\begin{split}
\label{eqn:DdLam1}
\m{D}^\dagger_i\g{\lambda}\rk_{x_{1:N}} &= -\alpha_k\frac{t_f - t_0}{2}\nabla_{\m{X}}\left(\left\langle{\g{\lambda}}\rk_{x_i},\m{f}_{x_i}\rk\right\rangle+\left\langle{\g{\lambda}}\rk_{v_i},\m{f}_{v_i}\rk\right\rangle+\mathcal{L}_i\rk\right)\\
&+ \frac{{\delta}_{1i}}{w^{(1)}_1}\left(-\nabla_{X}\left(\mathcal{M} - \g{\psi}\m{b}\tr\right) - \g{\lambda}^{(1)}_{x_1} \right),
\end{split}\\
\begin{split}
\label{eqn:DdLam2}
\m{D}^\dagger_i{\g{\lambda}}_{v_{1:N}}\rk &= -{\alpha_k}\frac{t_f - t_0}{2}\nabla_{\m{V}}\left(\left\langle{\g{\lambda}}\rk_{x_i},\m{f}_{x_i}\rk\right\rangle+\left\langle{\g{\lambda}}\rk_{v_i},\m{f}_{v_i}\rk\right\rangle+\mathcal{L}_i\rk\right) \\
&+ \frac{{\delta}_{1i}}{w_1}\left(-\nabla_{\m{V}}\left(\mathcal{M} - \g{\psi}\m{b}\tr\right) - {\g{\lambda}}^{(1)}_{v_1} \right),
\end{split}\\
\begin{split}
\label{eqn:JaDU}
\m{0} &= \alpha_k\frac{t_f - t_0}{2}\nabla_{\m{U}}\left(\mathcal{L}_i\rk - \left\langle {\g{\lambda}}\rk_{v_i},\m{f}_{v_i}\rk\right\rangle\right),\\
&(i = 1,\hdots,N, \quad k = 1,\hdots, K),
\end{split}\\
\begin{split}
\label{eqn:lamxf}
\g{\lambda}\rK_{x_{N+1}} &=\nabla_{\m{X}}\left(\mathcal{M} -\g{\psi}\m{b}\tr\right),
\end{split}\\
\begin{split}
\label{eqn:lamvf}
{\g{\lambda}}\rK_{v_{N+1}} &=\nabla_{\m{V}}\left(\mathcal{M} -\g{\psi}\m{b}\tr\right),
\end{split}\\
\begin{split}
\label{eqn:hamt0}
-\nabla_{t_0}\left(\mathcal{M} - \g{\Psi}\m{b}\tr\right) = \sum^{K}_{k=1}-\alpha_k\sum^{N}_{i = 1}{H}_i\rk w_i,
\end{split}\\
\begin{split}
\label{eqn:hamtf}
-\nabla_{t_f}\left(\mathcal{M} - \g{\Psi}\m{b}\tr\right) = \sum^{K}_{k=1}\alpha_k\sum^{N}_{i = 1}{H}_i\rk w_i,
\end{split}
\end{align}
where $H_i\rk = \mathcal{L}\rk_i + \bml\rk_{x_i} \m{f}\rktr_{x_i} + {\bml}\rk_{v_i} \m{f}\rktr_{v_i}$ is the approximation the Hamiltonian, $\mathcal{H}$ in interval $k$. Equations~\eqref{eqn:firstorderx0}-\eqref{eqn:firstorderv0} allow the terms in the second lines of Eqs.~\eqref{eqn:DdLam1}--\eqref{eqn:DdLam2} to vanish which results in Eqs.~\eqref{eqn:DdLam1}--\eqref{eqn:hamtf} becoming discrete representations of the continuous time first-order optimality conditions from Eqs.~\eqref{eqn:firstorderopt1}--\eqref{eqn:firstordervf}.

\subsection{Weierstrass-Erdmann Conditions\label{sect:WECond}}

If the optimal control is discontinuous, additional optimality conditions called the {\em Weierstrass-Erdmann conditions} \cite{br:aoc} must be satisfied.  One of the Weierstrass-Erdmann conditions states that the Hamiltonian must be continuous at the location of a control discontinuity.  The Hamiltonian for the optimal control problem defined in Eqs.~\eqref{eqn:exampleMI} can be approximated as
\begin{equation}
\label{eqn:modHam}
\mathcal{H}\rk(\tau_i\rk) \approx H\rk_i = \mathcal{L}\rk_i + \bml\rk_{x_ i} \m{V}\rktr_i + \bml\rk_{v_i} \m{f}\rktr_i,
\end{equation}
where $\tau\rk_i \in [T_{k-1}, T_k]$, $i = 1,\hdots,N$ and $k = 1,\hdots,K$ are the $N$ LGR points in the $k^{th}$ mesh interval. The Weierstrass-Erdmann condition on the Hamiltonian can be written as \cite{br:aoc} 
\begin{equation}
\label{eqn:WECond}
\mathcal{H}(T_1^-) = \mathcal{H}(T_1^+),
\end{equation}
where $T_1^-$ and $T_+1$ on the left-hand side and right-hand side of the discontinuity, respectively.  

The analysis that follows will demonstrate that the transformed adjoint system of the modified LGR collocation method satisfies a discrete representation of the Weierstrass-Erdmann condition given in Eq.~\eqref{eqn:WECond}.  First, the transformations given in Eqs.~\eqref{eqn:modLGRSubX}--\eqref{eqn:modLGRSubVnp1} together with the definition of the Hamiltonian given in Eq.~\eqref{eqn:modHam}, Eq.~\eqref{eqn:modLGRKKTa} simplifies to
\begin{align} \label{eqn:lamalph}
\beta = \frac{t_f-t_0}{2}\sum^{N}_{i=1}w_i H_i\rk , \quad \left(k = 1,\hdots, K\right),
\end{align}
where $\beta$ is the Lagrange multiplier defined in Eq.~\eqref{eqn:ModLGRAugCost} associated with the constraint given in Eq.~\eqref{eqn:alphacon2}.  Next, multiplying Eq.~\eqref{eqn:lamalph} by $\alpha_k$ gives
\begin{align}\label{eqn:lamalph2}
\alpha_k\beta = \frac{t_f-t_0}{2}\alpha_k\sum^{N}_{i = 1} w_i H_i\rk,\quad \left(k = 1,\hdots, K\right).
\end{align}
Then, because $H$ is not an explicit function of time it follows that $H_i\rk$ is constant in each mesh interval.  Moreover, the right-hand side of Eq.~\eqref{eqn:lamalph2} is LGR quadrature approximation of the integral of the Hamiltonian over the interval $[T_{k-1}, T_k]$.  Consequently, using the definition of $\alpha_k$ from Eq.~\eqref{eqn:dsdtau}, Eq.~\eqref{eqn:lamalph2} can be rewritten as
\begin{equation}\label{eqn:lamalph3}
  \frac{2\alpha_k}{t_f-t_0}\beta = 2\alpha_k H\rk,\quad \left(k = 1,\hdots, K\right),
\end{equation}
Equation ~\eqref{eqn:lamalph3} then reduces to
\begin{equation}\label{eqn:HamB}
\frac{\beta}{t_f - t_0} = H\rk, \quad \left(k = 1,\hdots, K\right).
\end{equation}
The implication of Eq.~\eqref{eqn:HamB} is that the Hamiltonian must be the same value in each mesh interval.  Therefore,  Eq.~\eqref{eqn:lamalph3} can only be satisfied if the Hamiltonian is constant on the time interval $[-1,+1]$.  The transformed adjoint system of the standard LGR collocation method adjoint mapping scheme requires only that the Hamiltonian is constant within a mesh interval, but does not require that the Hamiltonian be constant across the entire time interval.  On the other hand, the modified LGR collocation mesh ensures that the Hamiltonian is constant across the entire time interval.  The following section provides an example that demonstrates the accuracy of the costate estimation method developed in Section \ref{sect:modadjoint} and compares the results of the modified LGR collocation method with the results obtained using the standard LGR collocation method.  

\subsection{Example of Costate Estimate\label{sect:exampleCostate}}

In this section the costate estimate arising from the modified LGR collocation method is demonstrated on the example problem given in Eqs.~\eqref{eqn:example} of Section~\ref{sect:example}.  For comparison, the exact switch point was hard coded into the standard LGR method.   The dual variables returned by the NLP solver are shown in Fig.~\ref{fig:dualvarcomp1} and \ref{fig:dualvarcomp2}.  Figure \ref{fig:dualvarcomp1} shows that the dual variables returned for the $\dot{x}$ approximation are exactly the same.  Figure \ref{fig:dualvarcomp2} shows a difference in the dual variables associated with the approximation $\dot{V}$ of $\dot{v}$, with the two dual variables of the modified LGR method located at the switch time ($\tau = 0$) arising from the additional collocation conditions associated with those differential equations that are a function of the control.
\begin{figure}[ht!]
\centering
\subfloat[Dual variable, $\Lambda_x$, for two-interval formulation of example given in Eq.~\eqref{eqn:exampleMI} using both the standard and modified LGR collocation.\label{fig:dualvarcomp1}]{\includegraphics[scale=0.45]{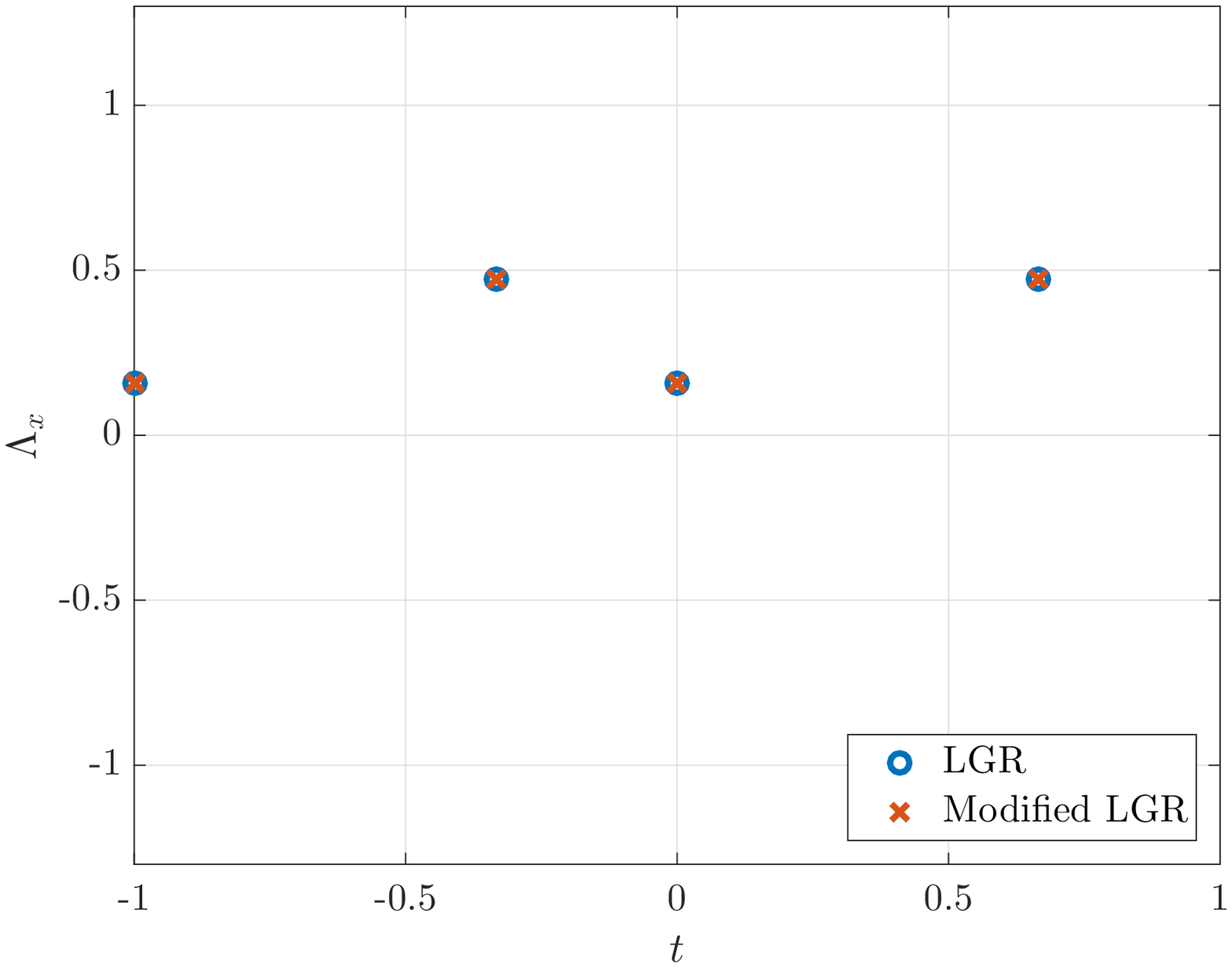}}~~\subfloat[Dual variable, ${\Lambda}_v$, for two-interval formulation of example given in Eq.~\eqref{eqn:exampleMI} using both the standard and modified LGR collocation.\label{fig:dualvarcomp2}]{\includegraphics[scale=0.45]{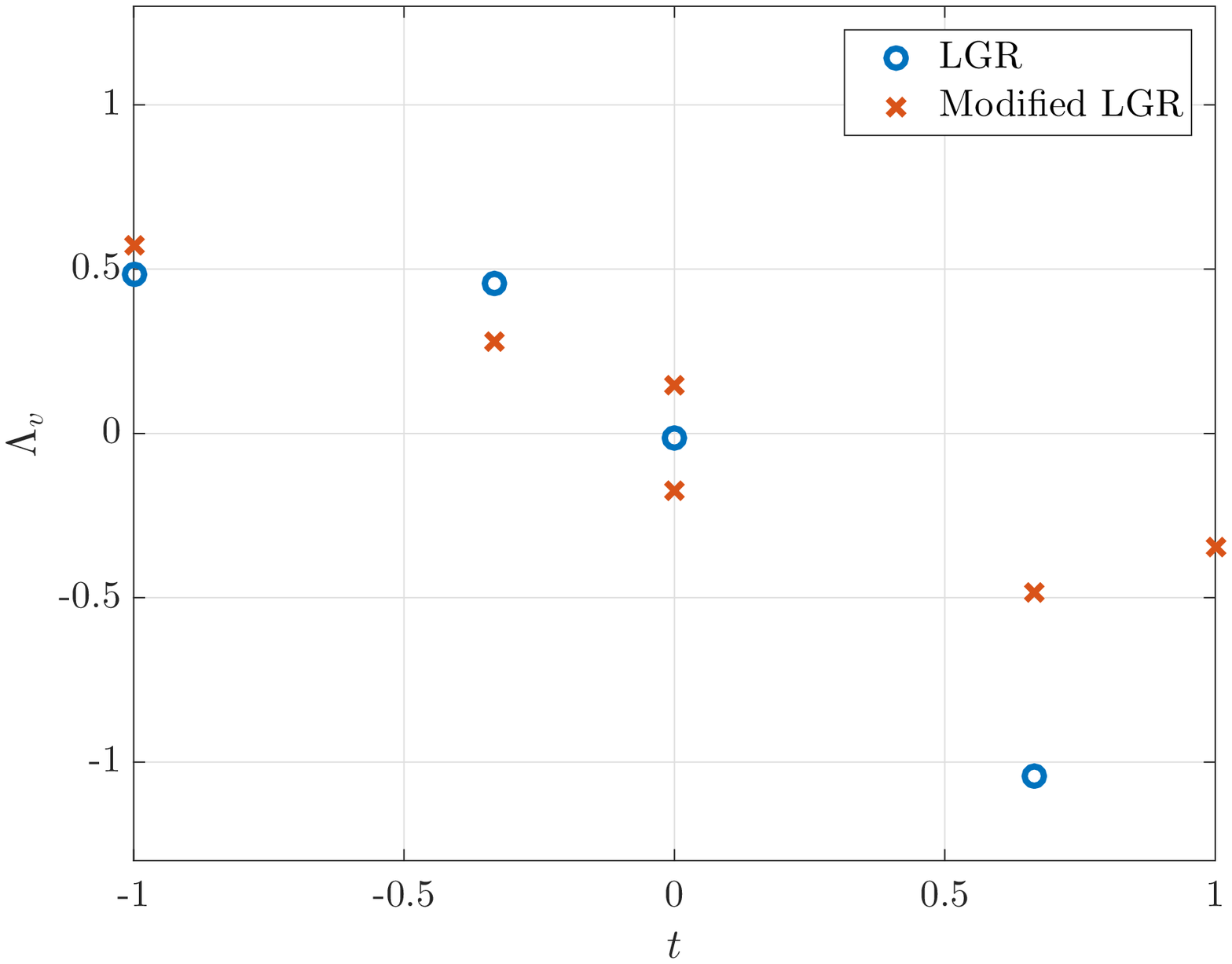}}
\caption{Dual variables $\Lambda_x$ and $\Lambda_v$ for the example problem using both the standard and modified LGR collocation methods.}
\end{figure}

\begin{figure}
\subfloat[Costate estimate, $\lambda_x(t)$, for two-interval formulation of example given in Eq.~\eqref{eqn:exampleMI} using both the standard and modified LGR collocation.\label{fig:costatevarcomp1}]{\includegraphics[scale=0.45]{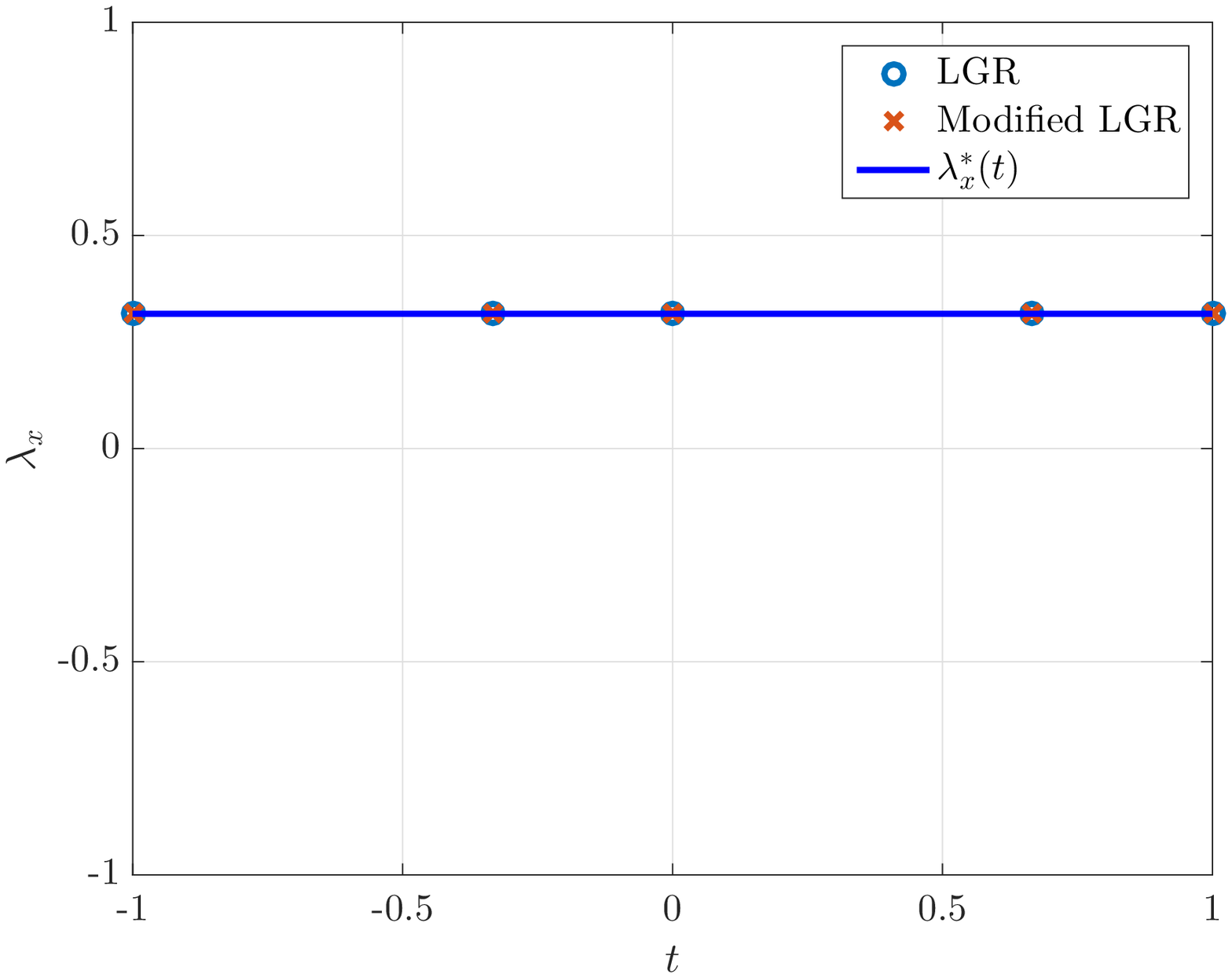}}~~\subfloat[Costate estimate, ${\lambda}_v(t)$, for two-interval formulation of example given in Eq.~\eqref{eqn:exampleMI} using both the standard and modified LGR collocation.\label{fig:costatevarcomp2}]{\includegraphics[scale=0.45]{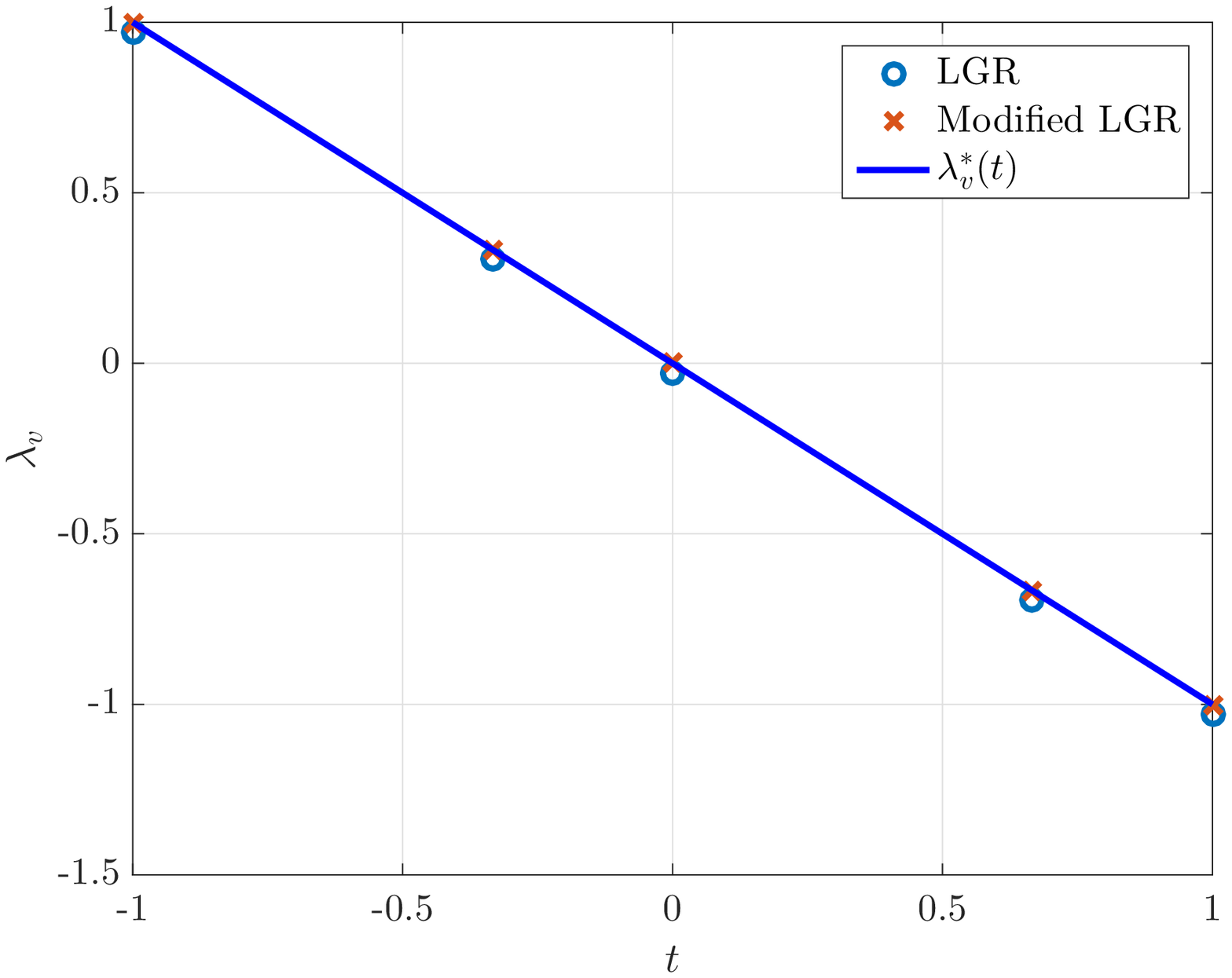}}
\caption{Costate estimates $\lambda_x$ and $\lambda_v$ for the example problem using both the standard and modified LGR collocation methods.}
\end{figure}

Figures \ref{fig:costatevarcomp1} and \ref{fig:costatevarcomp2} shows the costate approximations obtained using the standard LGR method and the modified LGR method.  Both methods return the correct value for $\lambda(t)$.  Note, however, that the estimate for ${\lambda}_v(t)$ is not correct when the standard LGR method is implemented with the switch time fixed at its exact value.  The fact that the approximation of ${\lambda}_v(t)$ is incorrect when using the exact switch time in the standard LGR method implies that the location of the switch time computed by the standard LGR method will also be incorrect.  
\begin{figure}[ht!]
\centering
\subfloat[Hamiltonian, $\mathcal{H}$, for the standard LGR method when the correct switch time, $T_1^* = 0$, is provided to the NLP solver.\label{fig:Ham}]{\includegraphics[scale=0.45]{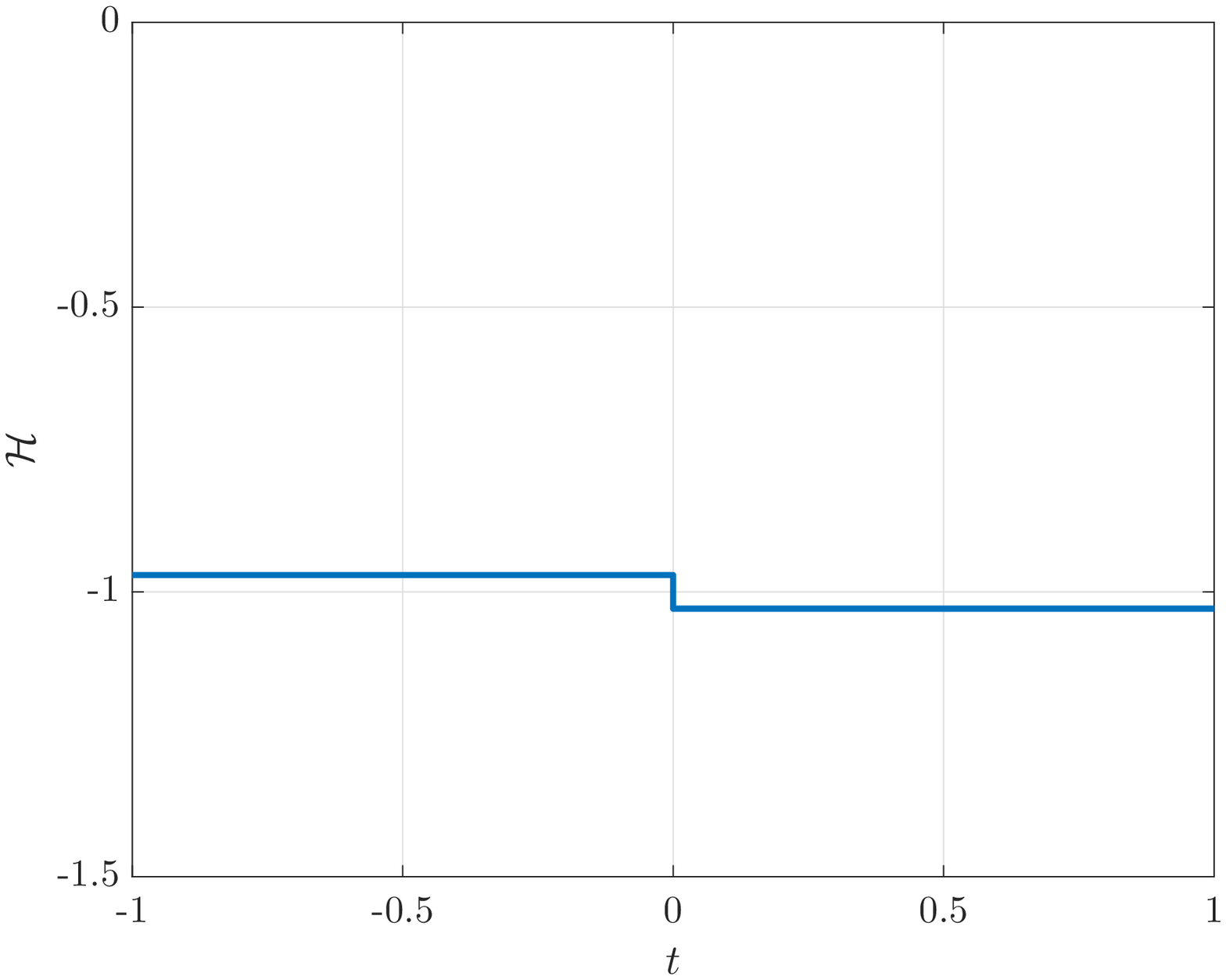}}~~\subfloat[Hamiltonian, $\mathcal{H}$, for the modified LGR method when the switch time, $T_1^* = 0$, is determined by the NLP solver.\label{fig:ModHam}]{\includegraphics[scale=0.45]{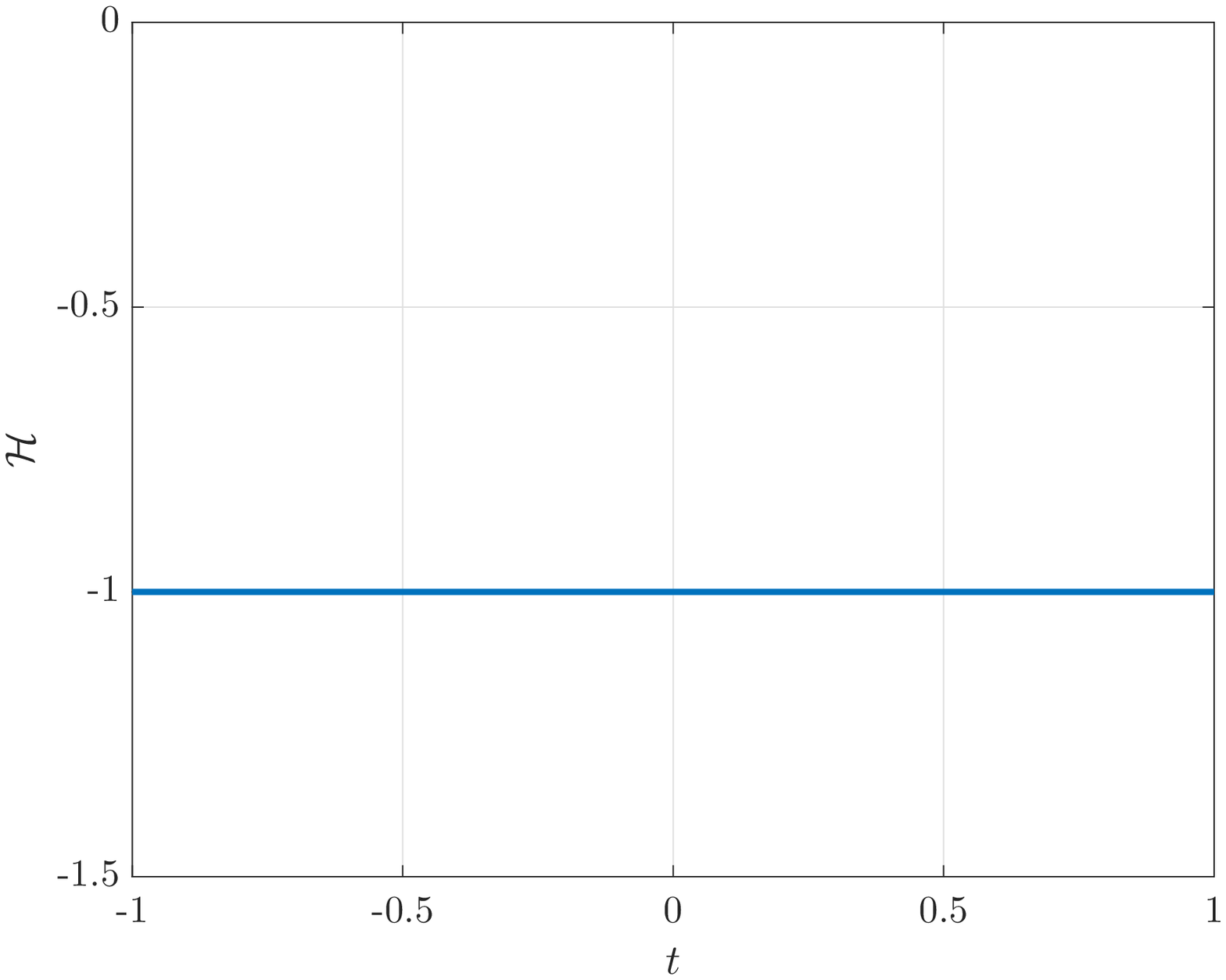}}
\caption{Hamiltonian for both the standard and modified LGR collocation method.}
\end{figure}

Figures \ref{fig:Ham} and \ref{fig:ModHam} demonstrate further the problem when using the standard LGR method when the switch time fixed at its exact value.  While the integral from $-1$ to $+1$ in Fig. \ref{fig:Ham} is correct and each interval has a continuous and constant Hamiltonian,  the integral from $-1$ to $T_1$ and from $T_1$ to $+1$ is incorrect.  The clear discontinuity in the Hamiltonian from Fig.~\ref{fig:Ham}  shows that the Weierstrass-Erdmann conditions from Eq.~\eqref{eqn:WECond} are not satisfied by the standard LGR method.  Furthermore, the discontinuity in Fig.~\ref{fig:Ham} is a result of the incorrect costate that is returned from the standard LGR method as seen in Fig.~\ref{fig:costatevarcomp2}.  Figure \ref{fig:costatevarcomp2} shows that the ${\lambda}_v(T_1)\neq 0$ for the standard LGR method, so not only are the Weierstrass-Erdmann conditions not satisfied, but neither are the standard necessary conditions for optimality.  Figure \ref{fig:ModHam} demonstrates that the additional constraint from Eq.~\eqref{eqn:HamB} enforces continuity throughout the Hamiltonian thus satisfying the Weierstrass-Erdmann conditions of Eq.~\eqref{eqn:WECond}.

\clearpage

\section{Conclusions\label{sect:conclusions}}

A new method has been developed for solving optimal control problems whose solutions are nonsmooth.  The standard LGR collocation method has been modified to include two variables and two constraints at the end of a mesh interval.  These new variables are the time associated with the intersection of mesh intervals and the value of the control at the end of the each mesh interval.  The two additional constraints are a collocation condition on each differential equation that is a function of control and an inequality constraint on the control at the endpoint of each mesh interval.  These additional constraints modify the search space of the nonlinear programming problem such that an accurate approximation to the location of the nonsmoothness is obtained.  A transformation of the Lagrange multipliers of the NLP to the costate of the optimal control problem has then been developed and the resulting transformed adjoint system of the modified Legendre-Gauss-Radau method has then been derived.  Furthermore, it has been shown that the costate estimate satisfies the Weierstrass-Erdmann optimality conditions.  Finally, an example is used throughout the paper to motivate the various aspects of the discussion.  

\section*{Acknowledgments}

The authors gratefully acknowledge support for this research from the U.S.~Office of Naval Research under grants N00014-15-1-2048 and N00014-19-1-2543, from the U.S.~National Science Foundation under grants CBET-1404767, DMS-1522629, DMS-1819002, DMS-1924762, and CMMI-1563225.  

\renewcommand{\baselinestretch}{1}
\normalsize
\normalfont
\bibliographystyle{aiaa}

\end{document}